\newcommand{\RR}{\mathbb{R}}
\newcommand{\CC}{\mathbb{C}}
\newcommand{\NN}{\mathbb{N}}
\newcommand{\QQ}{\mathbb{Q}}
\newcommand{\OO}{\mathcal{O}}
\newcommand{\Ss}{\mathcal{S}}
\newtheorem{Tw}{Theorem}
\newtheorem{Le}{Lemma}
\newtheorem{Wn}{Corollary}
\theoremstyle{definition}
\newtheorem{Df}{Definition}
\theoremstyle{remark}
\newtheorem{Uw}{Remark}
\begin{document}
\keywords{Stokes phenomenon, Heat equation, Borel summability}
\subjclass[2010]{35C10, 35C15, 35K05}
\title[The Stokes phenomenon]{The Stokes phenomenon for certain partial differential equations with meromorphic initial data}
\author{S{\l}awomir Michalik}
\address{Faculty of Mathematics and Natural Sciences,
College of Science\\
Cardinal Stefan Wyszy\'nski University\\
W\'oycickiego 1/3,
01-938 Warszawa, Poland}
\email{s.michalik@uksw.edu.pl}
\urladdr{\url{http://www.impan.pl/~slawek}}
\author{Bo\.{z}ena Podhajecka}
\address{Faculty of Mathematics and Natural Sciences,
College of Science\\
Cardinal Stefan Wyszy\'nski University\\
W\'oycickiego 1/3,
01-938 Warszawa, Poland}
\email{bpodhajecka@o2.pl}

\begin{abstract}
We study the Stokes phenomenon for the solutions of the $1$-dimensional complex heat equation and its generalizations with meromorphic initial data.
We use the theory of Borel summability for the description of Stokes lines, anti-Stokes lines, jumps across Stokes lines,
and a maximal family of the solutions.
\end{abstract}

\maketitle

\section{Introduction}
In recent years the methods of Borel summability have been intensively used for the study of the asymptotic behavior of solution to non-Kowalevskian
complex partial differential equations with holomorphic initial data in a complex neighborhood of the origin.
The procedure of summability allows us to get the actual solution from a formal one. 
This actual solution is holomorphic in some sectorial neighborhood of the origin and, moreover,
its asymptotic expansion at the origin is given by the formal solution.

The first significant result was obtained for the $1$-dimensional complex heat equation 
\begin{equation}
 \label{eq:h}
 \begin{cases}
  \partial_t u=\partial_z^2 u\\
  u(0,z)=\varphi(z)\in\OO(D),
 \end{cases}
\end{equation}
where $D$ is a complex neighborhood of the origin, and was presented by D.A.~Lutz, M.~Miyake and R.~Sch\"afke in \cite{L-M-S}.
The authors characterized the Borel summability in a direction $\theta$
of the formal solution of the Cauchy problem (\ref{eq:h}) in terms of the analytic continuation property of the Cauchy datum $\varphi$ with the
appropriate growth condition.

In \cite{Miy}, M.~Miyake gave the characterization of the Borel summability for the solutions of the Cauchy problem
for  non-Kowalevskian complex partial differential equations of the form
\begin{equation}
 \label{eq:ghe}
 \begin{cases}
\partial_{t}^{p}u(t,z)=\partial_{z}^{q}u(t,z),\,\, p,q\in\NN,1\leq p<q \\
u(0,z)=\varphi(z) \in\OO(D)\\
\partial_{t}^{j}u(0,z)=0\ for\,\, j=1,2,\dots,p-1.
 \end{cases}
\end{equation}

Another important achievement was given by K.~Ichinobe in \cite{I2}, who defined an integral representation of the Borel sum of
the divergent solution of (\ref{eq:ghe}).

It is worth pointing out that the theory of summability and multisummability of the formal solutions of PDEs
is intensively developed and many other interesting papers were written by W.~Balser \cite{B1,B5}, W.~Balser and M.~Loday-Richaud \cite{B-L},
W.~Balser and M.~Miyake \cite{B-Mi}, O.~Costin, H.~Park and Y.~Takei \cite{C-P-T},
K.~Ichinobe \cite{I,I3}, G.~{\L}ysik \cite{L3,L4}, S.~Malek \cite{Mal2}, S.~\={O}uchi \cite{O3}, H.~Tahara and H.~Yamazawa \cite{T-Y},
M.~Yoshino \cite{Yo} and others.

In this paper, we consider equations with meromorphic initial data, whose formal solutions are summable in all,  but finitely many singular
directions. Hence we get a family of solutions, which we specify using Stokes lines, anti-Stokes lines and jumps across these Stokes lines.

The main focus is on the heat equation (\ref{eq:h}). First, when the Cauchy datum $\varphi$ has a simple pole at the point $z_0\in\CC\setminus\{0\}$,
using the $1$-sum in a direction $d$ of the formal solution $\hat u(t,z)=\sum_{n=0}^{\infty}\frac{\varphi^{(2n)}(z)t^n}{n!}$ of (\ref{eq:h}),
we describe the Stokes phenomenon and a maximal family of solutions. In this way we obtain the following main result
(for the precise formulation see Theorem \ref{th:main}).
\bigskip\par
{\em
Assume that the Cauchy datum of (\ref{eq:h}) is given by
$\varphi(z)=\frac{a}{z-z_0}+\tilde{\varphi}(z)$ for some $a,z_0 \in\CC\setminus\{0\}$ and
$\tilde{\varphi}(z)\in\OO^{2}(\CC)$. 
Set $\delta:=2\*\arg z_0$, $u_1:= u^{\theta}$ for
$\theta\in(\delta,\delta+2\pi)\mod4\pi$ and $u_2:= u^{\theta}$ for $\theta\in(\delta + 2\pi,\delta+4\pi)\mod4\pi$,
where $u^{\theta}$ is a solution of (\ref{eq:h}) given by (\ref{eq:heat_solution}).

Then $\{u_1,u_2\}$ is a maximal family of solutions of (\ref{eq:h}) on the Riemann surface of the square root function.
Moreover, Stokes lines for a formal solution $\hat{u}$ are the sets $\arg t = \delta$ and
$\arg t = \delta+2\pi$, and anti-Stokes lines for $\hat{u}$ are the sets $\arg t = \delta\pm\frac{\pi}{2}$ and
$\arg t = \delta+2\pi\pm\frac{\pi}{2}$. Jumps across the Stokes lines
are given respectively by
\begin{itemize}
 \item $u^{\delta^+}(t,z)-u^{\delta^-}(t,z)=-i\*\sqrt{\pi/t}\*a\*e^{-\frac{(z_0-z)^2}{4t}}$,
 \item $u^{(\delta + 2\pi)^+}(t,z)-u^{(\delta +2\pi)^-}(t,z)=i\*\sqrt{\pi/t}\*a\*e^{-\frac{(z_0-z)^2}{4t}}$.
\end{itemize}
}
\bigskip\par
Next, we extend this result to the Cauchy datum $\varphi$ given by a meromorphic function with finitely many poles, i.e.
$\varphi(z)=\sum_{l=1}^{n}\sum_{k=1}^{r_l}\frac{a_{lk}}{(z-z_l)^k}+\tilde{\varphi}(z)$ with $z_l\in\CC\setminus\{0\}$,
$\tilde{\varphi}(z)\in\OO^2(\CC)$.

Finally, in a similar way, we describe the Stokes phenomenon and a maximal family of solutions for the Cauchy problem (\ref{eq:ghe}).

The paper is organized as follows. In Section 2, we review some of the standard terminology, facts, and theorems about asymptotic expansions
and $k$-summability. In Section 3, we introduce the notion of Stokes lines, anti-Stokes lines, and jumps for $k$-summable formal power
series. Section 4 provides a detailed exposition of a maximal family of actual solutions of a given non-Kowalevskian
equation. In the next section, we present and prove the main results of this paper adding several corollaries as conclusions from 
Theorem \ref{th:main}. In Section 6, we extend Theorem \ref{th:main} to the solutions of the equation (\ref{eq:ghe}), which is a generalization
of the heat equation (\ref{eq:h}).
The last section contains final remarks.

\section{Asymptotic expansions and summability}
In this section we recall necessary definitions, lemmas and theorems about asymptotic expansions and summability.
For more details we refer the reader to \cite{B2}.
\subsection{Sectors and formal power series}
Let us begin with the following definitions:
\begin{Df}
A \textit{sector $S$ in a direction $d$ with an opening $\alpha$ and radius $R$} in the universal covering space $\tilde{\CC}$ of
$\CC\setminus\{0\}$ is a set of the form:
\begin{displaymath}
S=S(d,\alpha,R)=\{z\in\tilde{\CC}\colon\ z=r\*e^{i\phi},\ r\in(0,R),\ \phi\in(d-\alpha/2,d+\alpha/2)\},
\end{displaymath}
where $d\in\RR$, $\alpha>0$ and $R\in\RR_+$.

If $R=+\infty$, then a sector $S$ is \textit{unbounded}, which will be written as $S=S(d,\alpha)$.
\end{Df}

\begin{Df}
A sector $S^{*}$ is called a \textit{proper subsector} of $S=S(d,\alpha,R)$, if
$\overline{S^{*}}\setminus\{0\} \subseteq S$. The notation $S^{*}\prec S$ will be used.
\end{Df}

\begin{Df}
Let $k>0$ and let $f$ be an analytic function in an unbounded sector $S$  (it will be denoted briefly by  $f\in\OO(S)$). A function $f$ is of
\textit{exponential growth of order at most $k$}, if for every subsector $S^*\prec S$ there exist constants $C_1, C_2>0$ such that
$|f(x)|\le C_1\*e^{C_2|x|^{k}}$ for every $x\in S^*$. The set of all such functions will be denoted by $\OO^{k}(S)$.

Analogously, the set of entire functions of exponential growth of order at most $k$ will be denoted by $\OO^{k}(\CC)$.
\end{Df}

Now consider formal power series $\sum_{n=0}^{\infty} a_{n}\*t^{n}$, where $(a_n)^{\infty}_{n=0}$ is a sequence of complex numbers.
The set of all such formal power series will be denoted by $\CC[[t]]$.
More generally, the set of formal power series of the form $\sum_{n=0}^{\infty} a_{n}(z)t^{n}$, where
$a_{n}(z)\in\OO(D_r)$ for all $n\in\NN_{0}$ and $D_r=\{z\in\CC\colon |z|<r\}$, will be denoted by $\OO(D_r)[[t]]$.

If the radius $r$ is not essential, the set $D_r$ will be denoted briefly by $D$.  

\begin{Df}
Let $s\in\RR$. A formal power series $\sum_{n=0}^{\infty} a_{n}\*t^{n}$ is called a \emph{formal power series of Gevrey order $s$}, if there exist $A, B>0$
such that $|a_n|\le A\*B^n\*(n!)^s$ for every $n\in\NN_0$.
The set of all such formal power series will be denoted by $\CC[[t]]_s$ (resp. $\OO(D_r)[[t]]_s$).
\end{Df}

\subsection{Asymptotic expansions}
In this subsection we restrict our attention to the Gevrey asymptotics.

\begin{Df}
Let $S$ be a given sector in the universal covering space $\tilde{\CC}$ and $f\in\OO(S)$. A formal power series
$\hat{f}(t)=\sum_{n=0}^{\infty} a_{n}\*t^{n}\in\CC[[t]]_{s}$
of Gevrey order $s$ $(s\in\RR)$
 is called \textit{Gevrey's asymptotic expansion of order $s$} of the function $f$ in $S$ if  for every $S^*\prec S$ there exist $A, B>0$ such that for every
 $N\in\NN_0$ and every $t\in S^*$
 $$|f(t)-\sum_{n=0}^{N} a_{n}\*t^{n}|\le A\*B^N\*(N!)^{s}\*|t|^{N+1}.$$
If this is so, one can use the notation $f(t)\sim_s\hat{f}(t)$ in $S$.
\end{Df}

We recall two important theorems about the Gevrey asymptotics.
\begin{Tw}[Ritt's theorem, {\cite[Proposition 10]{B2}}]
Let $\hat{x}(t)\in\CC[[t]]_{s}$, where $s>0$. Let $S$ be a sector of an opening $\alpha$, where $0<\alpha\le s\*\pi$.
Then there exists $x(t)\in\OO(S)$ such that  $x(t)\sim_{s} \hat{x}(t)$ in $S$.
\end{Tw}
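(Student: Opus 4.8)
The plan is to prove Ritt's theorem by an explicit construction of $x(t)$ as a suitable sum of the partial sums of $\hat{x}(t)$, smoothed out by cutoff factors that kill the tail contributions. Write $\hat{x}(t)=\sum_{n=0}^\infty a_n t^n$ with $|a_n|\le A B^n (n!)^s$. The classical idea is to set
\begin{equation*}
x(t)=\sum_{n=0}^\infty a_n t^n \bigl(1-e^{-(c/t)^{1/s}}\bigr)^{?}
\end{equation*}
but more robustly one uses a series of \emph{incomplete} terms: choose a sequence $r_n\downarrow 0$ and define $x(t)=\sum_{n=0}^\infty a_n t^n \chi_n(t)$, where $\chi_n$ is a holomorphic function on $S$ that is close to $1$ for $|t|$ small relative to $r_n$ and decays fast enough for $|t|$ comparable to or larger than $r_n$. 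Concretely, on a sector of opening $\alpha\le s\pi$ one can find a holomorphic branch of $e^{-(\lambda/t)^{1/s}}$ whose real part of $(\lambda/t)^{1/s}$ stays positive (this is exactly where the opening restriction $\alpha\le s\pi$ enters: $1/s$ times the angular width must stay within $(-\pi/2,\pi/2)$ so that the exponential genuinely decays), and use products or single copies of such exponentials as the cutoffs.

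First I would fix a proper subsector and record the key analytic estimate: for $w$ in a sector of half-opening $<\pi/2$, $|e^{-w}|\le e^{-\kappa|w|}$ for some $\kappa>0$; applied to $w=(\lambda_n/t)^{1/s}$ this gives $|e^{-(\lambda_n/t)^{1/s}}|\le e^{-\kappa (\lambda_n/|t|)^{1/s}}$. Second, I would choose the parameters $\lambda_n$ (equivalently the $r_n$) growing just fast enough — roughly $\lambda_n \sim n^s/(\text{something})$ — so that two things happen simultaneously: (a) the series $\sum_n a_n t^n e^{-(\lambda_n/t)^{1/s}}$ converges locally uniformly on $S$, defining a holomorphic function $x(t)$, using that the exponential factor beats the factorial growth of $a_n$ once $|t|$ is small compared with $\lambda_n$; and (b) the complementary series $\sum_n a_n t^n (1-e^{-(\lambda_n/t)^{1/s}})$, which represents $\hat{x}(t)-x(t)$ formally, has each term bounded by $A B^n (n!)^s |t|^n \cdot C(\lambda_n/|t|)^{-1/s}$ using $|1-e^{-w}|\le C|w|^{-?}$... more precisely one uses $|1-e^{-w}|\le |w|$ for the early terms and smallness for the rest. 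Third, I would estimate the remainder $|x(t)-\sum_{n=0}^N a_n t^n|$ by splitting at $n=N$: for $n\le N$ the discrepancy is $a_n t^n(e^{-(\lambda_n/t)^{1/s}}-1)$, bounded using $|1-e^{-w}|\le |w| e^{|w|}$ style bounds together with the decay, and for $n>N$ one bounds $|a_n t^n e^{-(\lambda_n/t)^{1/s}}|$ directly; optimizing the cutoff scale against $|t|$ yields the target bound $A' B'^N (N!)^s |t|^{N+1}$, i.e. $x(t)\sim_s \hat{x}(t)$.

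The main obstacle is the simultaneous tuning of the cutoff parameters $\lambda_n$: they must be small enough that the cutoff factors $e^{-(\lambda_n/t)^{1/s}}$ are genuinely close to $1$ on the relevant range of $|t|$ (so the asymptotic relation holds with the correct Gevrey constants), yet large enough that the factor forces convergence of the defining series despite $|a_n|$ growing like $(n!)^s$. Getting the bookkeeping of these two competing requirements right, and verifying that the resulting remainder estimate has the Gevrey-$s$ form with constants independent of $N$ on each proper subsector, is the delicate part; the opening constraint $\alpha\le s\pi$ is used precisely to guarantee a holomorphic, genuinely decaying branch of the exponential cutoff on the whole sector. Since the statement is quoted verbatim from \cite[Proposition 10]{B2}, an alternative acceptable route is simply to cite that reference; but the construction above is the standard self-contained argument.
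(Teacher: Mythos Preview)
The paper does not give its own proof of this statement: it merely records the theorem and cites \cite[Proposition 10]{B2}. So there is nothing to compare your argument against beyond that reference.

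Your sketch follows the classical construction behind the Gevrey Ritt theorem (holomorphic exponential cutoffs $e^{-(\lambda_n/t)^{1/s}}$ on a sector of opening at most $s\pi$, with parameters tuned to force both convergence and the Gevrey remainder estimate), which is indeed the approach in Balser's book. As written, however, it is a plan rather than a proof: the literal ``?'' placeholders in your displayed formulas and the unresolved choice of the $\lambda_n$ mean the two competing estimates you correctly identify are never actually carried out. In particular you need the concrete bound $|1-e^{-w}|\le |w|$ for $\mathrm{Re}\,w\ge 0$ (not the vague $|w|^{-?}$), a specific choice such as $\lambda_n^{1/s}\asymp 1/(B n)$ so that $|a_n t^n|e^{-\kappa(\lambda_n/|t|)^{1/s}}$ is summable, and then a clean split of the remainder at $n=N$ producing constants independent of $N$. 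None of this is hard, but until those details are written down the argument is incomplete. Since the paper is content to cite \cite{B2}, either completing the sketch or simply invoking that reference, as you yourself note at the end, is acceptable.
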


\begin{Tw}[Watson's lemma, {\cite[Proposition 11]{B2}}]
\label{th:watson}
Let $S$ be a sector of an opening $\alpha$ such that $\alpha>s\pi$ and $s>0$. Suppose that $x(t)\in\OO(S)$ satisfies $x(t)\sim_{s} 0$ in $S$.
Then $x(t)\equiv 0$ in $S$.
\end{Tw}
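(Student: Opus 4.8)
The plan is to first convert the Gevrey-flatness hypothesis $x(t)\sim_s 0$ into a genuine exponential decay estimate, and then to rule out any nonzero holomorphic function with such decay on a sufficiently wide sector by a Phragm\'en--Lindel\"of argument. Write $k:=1/s$. Since the asymptotic expansion is the zero series, all coefficients $a_n$ vanish, so on a given proper subsector $S^*\prec S$ the defining inequality reads $|x(t)|\le A B^N (N!)^s |t|^{N+1}$ for every $N\in\NN_0$ and every $t\in S^*$. For fixed $t$ with $r:=|t|$ small I would treat $N$ as a continuous parameter and minimize the right-hand side; by Stirling's formula the optimum is attained near $N^*\approx (Br)^{-1/s}$, and substituting back yields a bound of the form $|x(t)|\le C\,|t|\,e^{-\delta |t|^{-k}}$ on $S^*$, where $C,\delta>0$ and $\delta$ is determined by $B$ and $s$. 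Thus $x$ decays like $\exp(-\delta|t|^{-k})$ as $t\to 0$ inside every proper subsector.

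Next I would reduce to showing that $x\equiv 0$ on a single proper subsector. Because $\alpha>s\pi$, I can choose a proper subsector $S^{*}\prec S$ of radius $R'$ whose opening $\alpha'$ still satisfies $s\pi<\alpha'<\alpha$; after a rotation I assume $S^*$ is bisected by the positive real axis. It then suffices to prove $x\equiv 0$ on $S^*$, since by the identity theorem vanishing on the open set $S^*$ forces $x\equiv 0$ on the connected sector $S$. To exploit the decay I pass to the variable $\zeta=1/t$, which turns $S^*$ into an unbounded sector $\{\,|\arg\zeta|<\alpha'/2,\ |\zeta|>1/R'\,\}$ and the function $h(\zeta):=x(1/\zeta)$ into a holomorphic function obeying $|h(\zeta)|\le C|\zeta|^{-1}e^{-\delta|\zeta|^{k}}$. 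Finally the power map $\xi=\zeta^{\beta}$ with $\beta=\pi/\alpha'$ straightens this sector onto the right half-plane $\{\Re\xi>0\}$ outside a disc, and the key point is that $\alpha'>\pi/k$ forces $\beta<k$, so the transformed function $g(\xi):=h(\xi^{1/\beta})$ satisfies $|g(\xi)|\le C'|\xi|^{-1/\beta}e^{-\delta|\xi|^{\gamma}}$ with exponent $\gamma:=k/\beta>1$.

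The conclusion then follows from a Phragm\'en--Lindel\"of / maximum-modulus argument on the half-plane. For each $\epsilon>0$ I consider $G_\epsilon(\xi):=g(\xi)\,e^{\epsilon\xi}$. On the imaginary-axis part of the boundary $\Re\xi=0$, so $|e^{\epsilon\xi}|=1$ and $G_\epsilon$ is bounded there by a constant independent of $\epsilon$; on the bounding arc $|\xi|=(1/R')^{\beta}$ it is likewise bounded uniformly in $\epsilon$; and in the interior $|G_\epsilon(\xi)|\le C'|\xi|^{-1/\beta}\exp(\epsilon|\xi|-\delta|\xi|^{\gamma})\to 0$ as $|\xi|\to\infty$, precisely because $\gamma>1$. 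Hence the maximum principle gives a bound $|G_\epsilon|\le M$ with $M$ independent of $\epsilon$, i.e. $|g(\xi)|\le M e^{-\epsilon\Re\xi}$ throughout the half-plane. Letting $\epsilon\to\infty$ at a fixed interior point with $\Re\xi>0$ forces $g(\xi)=0$, so $g\equiv 0$, whence $h\equiv 0$ and $x\equiv 0$ on $S^*$, and therefore on all of $S$.

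I expect the delicate part to be the Phragm\'en--Lindel\"of step: one must set up the conformal changes of variable so that the strict opening hypothesis $\alpha>s\pi$ translates exactly into the strict inequality $\gamma>1$ for the decay exponent, and one must justify the maximum principle on an unbounded region, which is legitimate here only because $G_\epsilon$ tends to $0$ at infinity. The strictness is essential and cannot be relaxed, since for $\alpha=s\pi$ the function $e^{-1/t^{k}}$ is Gevrey-flat of order $s$ on a sector of opening exactly $s\pi$ without vanishing; this is the sharp counterpart of the opening appearing in Ritt's theorem above. The optimization over $N$ in the first step is routine calculus, and I would only record the resulting exponential bound.
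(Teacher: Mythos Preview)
The paper does not give its own proof of Watson's lemma; it is quoted from \cite[Proposition 11]{B2} without argument. Your sketch is the classical route (exponential-decay-from-Gevrey-flatness followed by Phragm\'en--Lindel\"of), and the overall strategy is sound: the conversion of $x\sim_s 0$ into a bound $|x(t)|\le C\,e^{-\delta|t|^{-1/s}}$ by optimizing over $N$ is standard, and the chain of substitutions $t\mapsto 1/t\mapsto (1/t)^{\pi/\alpha'}$ correctly turns the strict opening hypothesis $\alpha'>s\pi$ into a decay exponent $\gamma=k/\beta>1$ on the half-plane.

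There is, however, one genuine slip in the Phragm\'en--Lindel\"of step. You claim that on the inner arc $|\xi|=(1/R')^{\beta}=:\rho$ the function $G_\epsilon(\xi)=g(\xi)e^{\epsilon\xi}$ is bounded uniformly in $\epsilon$; this is false, since on that arc $\Re\xi$ ranges over $[0,\rho]$ and hence $|e^{\epsilon\xi}|$ can be as large as $e^{\epsilon\rho}$. The maximum principle therefore only yields
\[
|g(\xi)|\le \max\bigl(M_1,\;M_2\,e^{\epsilon\rho}\bigr)\,e^{-\epsilon\,\Re\xi},
\]
with $M_1$ the bound on the imaginary rays and $M_2=\sup_{|\xi|=\rho}|g|$. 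Letting $\epsilon\to\infty$ then gives $g(\xi)=0$ only on the smaller region $\{\Re\xi>\rho\}$, not on the whole half-plane. This is enough, of course: that region is open and nonempty, so the identity theorem finishes the job, and unwinding the changes of variable gives $x\equiv 0$ on $S^*$ and hence on $S$. So the argument is easily repaired, but as written the uniformity claim is incorrect and the conclusion ``$|g(\xi)|\le M e^{-\epsilon\Re\xi}$ with $M$ independent of $\epsilon$'' does not follow.
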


\subsection{Summability}
\label{sect}
This subsection presents a method which is usually used for the investigation of the formal solutions of PDEs with constant coefficients.
The title procedure consists in the change of a formal power series $\hat{x}(t)=\sum_{n=0}^{\infty} a_{n}t^{n}\in\CC[[t]]$
into a holomorphic function on some sectorial neighborhood of the origin.

Fix $k>0$, $d\in\RR$ and $\varepsilon>0$. First, it will be used a \textit{Borel modified transform of order $k$} defined by
$$(\breve{B_{k}}\hat{x})(t):=\sum_{n=0}^{\infty}\frac{a_n\,\*t^n\,\*n!}{\Gamma{\bigl(1+n(1+\frac{1}{k})\bigr)}}.$$
If the above power series is convergent then one can apply the \textit{Ecalle operator $\textit{E}_{k,d}$ of order $k$ in the direction $d$}, i.e.
\begin{equation}
\label{eq:ecalle}
(\textit{E}_{k,d}\,g)(t):=t^{-k/(1+k)}\int_{e^{i\theta}\RR_+}g(s)\,C_{(k+1)/k}((s/t)^{\frac{k}{1+k}})\,ds^{\frac{k}{1+k}},
\end{equation}
where $g(s)=(\breve{B_{k}}\hat{x})(s)$, a function $C_{\alpha}$ (see \cite[Section 11]{B2})  is of the form
\begin{gather*}
 C_{\alpha}(\tau):=\sum_{n=0}^{\infty}\frac{(-\tau)^n}{n!\,\*\Gamma\bigl(1-\frac{n+1}{\alpha}\bigr)}\quad\text{for}\quad \alpha>1\,\,\, 
\end{gather*}
and the integration is taken over any ray $e^{i\theta}\RR_+:=\{re^{i\theta}\colon r\geq 0\}$ with $\theta\in (d-\varepsilon/2,d+\varepsilon/2)$.

\bigskip

Now let us check when the integral in (\ref{eq:ecalle}) is convergent. For this purpose we recall an useful lemma about the function $C_{\alpha}$:
\begin{Le}{\cite[Lemma 5.1]{B0}}
\label{le:kk}
Let $\alpha>1$ and $\beta\neq 0$ are such that $\frac{1}{\beta}+\frac{1}{\alpha}=1$.
Then for every $\varepsilon>0$ there exist positive constants $A_1, A_2$ such that $|C_{\alpha}(\tau)|\leq A_1\*e^{-A_{2}|\tau|^{\beta}}$
for every $\tau$ such that $|\mathrm{arg} \tau|\leq \frac{\pi}{2\*\beta}-\varepsilon$.
\end{Le}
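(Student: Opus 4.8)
The plan is to reduce the sectorial decay estimate to a classical steepest-descent problem, after first producing a convenient contour-integral representation of $C_\alpha$. Starting from Hankel's formula for the reciprocal Gamma function, $\frac{1}{\Gamma(s)}=\frac{1}{2\pi i}\int_{\mathcal{H}}e^{v}v^{-s}\,dv$, where $\mathcal{H}$ is a Hankel contour wrapping the negative real axis, I would substitute $s=1-\frac{n+1}{\alpha}$ into each coefficient of the defining series, interchange the (absolutely convergent) sum with the integral, and sum the resulting exponential series. This gives $C_\alpha(\tau)=\frac{1}{2\pi i}\int_{\mathcal{H}}v^{\frac{1}{\alpha}-1}e^{v-\tau v^{1/\alpha}}\,dv$. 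The substitution $w=v^{1/\alpha}$ (so $v=w^\alpha$, $dv=\alpha w^{\alpha-1}\,dw$) then collapses the weight, since $v^{\frac{1}{\alpha}-1}\,dv=\alpha\,dw$, and yields the clean representation
\[
C_\alpha(\tau)=\frac{\alpha}{2\pi i}\int_{\gamma}e^{w^\alpha-\tau w}\,dw,
\]
where $\gamma$ is the image of $\mathcal{H}$. The care needed here lies in tracking the branch cut of $v^{1/\alpha}$ (precisely the reason $\mathcal{H}$ encircles the negative axis) and in justifying the term-by-term integration, which follows from the exponential decay of $e^{v}$ along the two rays of $\mathcal{H}$.

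Second, I would analyze the phase $\phi(w)=w^\alpha-\tau w$ by steepest descent. Its saddle point solves $\phi'(w)=\alpha w^{\alpha-1}-\tau=0$, i.e. $w_\ast=(\tau/\alpha)^{1/(\alpha-1)}$, and a short computation using $\frac{1}{\alpha}+\frac{1}{\beta}=1$ (so that $\frac{\alpha}{\alpha-1}=\beta$ and $\frac{\alpha-1}{\alpha}=\frac{1}{\beta}$) gives the critical value $\phi(w_\ast)=-\frac{1}{\beta}\,\alpha^{-1/(\alpha-1)}\,\tau^{\beta}$. Deforming $\gamma$ to the steepest-descent path through $w_\ast$ and carrying out the standard Laplace-type estimate produces, for large $|\tau|$, a bound of the form $|C_\alpha(\tau)|\le A_1'\,|\tau|^{-\gamma_0}\,e^{\operatorname{Re}\phi(w_\ast)}$, in which the negative power $-\gamma_0$ reflects the Gaussian width $|\phi''(w_\ast)|^{-1/2}$ at the saddle and is harmless.

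Third, I would convert $\operatorname{Re}\phi(w_\ast)$ into the stated bound using the sector restriction. Since $\operatorname{Re}(\tau^{\beta})=|\tau|^{\beta}\cos(\beta\arg\tau)$ and the hypothesis $|\arg\tau|\le\frac{\pi}{2\beta}-\varepsilon$ forces $|\beta\arg\tau|\le\frac{\pi}{2}-\beta\varepsilon$, we obtain $\cos(\beta\arg\tau)\ge\sin(\beta\varepsilon)>0$. Hence $\operatorname{Re}\phi(w_\ast)\le-A_2|\tau|^{\beta}$ with $A_2=\frac{\sin(\beta\varepsilon)}{\beta}\,\alpha^{-1/(\alpha-1)}>0$. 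The polynomial prefactor $|\tau|^{-\gamma_0}$ is then absorbed into the constant, and for bounded $\tau$ the asserted inequality is immediate, because $C_\alpha$ is entire and therefore bounded on compacta while $e^{-A_2|\tau|^{\beta}}$ stays bounded below there; enlarging $A_1$ covers this range.

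I expect the main obstacle to be making the deformation and the saddle estimate \emph{uniform} in $\arg\tau$ over the whole sector. As $\tau$ rotates, the saddle $w_\ast$ sweeps the sector $|\arg w_\ast|\le\frac{\pi}{2\alpha}-\frac{\varepsilon}{\alpha-1}$, and one must exhibit a single family of admissible descent contours along which both the off-saddle contribution and the tails inherited from $\mathcal{H}$ remain exponentially negligible compared with $e^{\operatorname{Re}\phi(w_\ast)}$, uniformly. The delicate bookkeeping is the control of the non-integer branch $w^\alpha$ along these rotating paths; once that is secured, the remainder is routine.
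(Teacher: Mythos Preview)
The paper does not give its own proof of this lemma: it is quoted verbatim as \cite[Lemma 5.1]{B0} and used as a black box. There is therefore nothing in the paper to compare your argument against line by line.

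That said, your approach is sound and is essentially the standard route to such estimates. The integral representation
\[
C_\alpha(\tau)=\frac{\alpha}{2\pi i}\int_{\gamma}e^{w^\alpha-\tau w}\,dw
\]
obtained from Hankel's formula is correct (your computation $v^{1/\alpha-1}\,dv=\alpha\,dw$ is right, and the interchange of sum and integral is justified because along the rays of $\mathcal{H}$ the factor $e^{v}$ decays like $e^{-|v|}$, which dominates $e^{|\tau||v|^{1/\alpha}}$ since $1/\alpha<1$). The saddle value $\phi(w_\ast)=-\beta^{-1}\alpha^{-1/(\alpha-1)}\tau^{\beta}$ and the conversion $\cos(\beta\arg\tau)\ge\sin(\beta\varepsilon)$ are also correct. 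The only genuine work, as you already flag, is the uniformity in $\arg\tau$ of the contour deformation; this is handled in Balser's treatment by choosing the rays of the deformed contour with opening strictly between $\pi/(2\alpha)$ and $\pi/\alpha$, so that $\operatorname{Re}(w^\alpha)\to-\infty$ along them while the saddle stays inside for all $\tau$ in the closed sector. Once that choice is fixed, the tail estimates are routine and uniform. Your outline would therefore yield a complete proof with modest additional bookkeeping.
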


\begin{Uw}
\label{re:rem 2}
If $\alpha=(k+1)/k$, then $\beta=k+1$. Thus Lemma \ref{le:kk} implies that for every $\tilde \varepsilon>0$ there exist $A_1,A_2>0$ such that
$$\bigl|C_{(k+1)/k}((s/t)^{\frac{k}{1+k}})\bigr|\leq A_1\*e^{-A_2\*|\frac{s}{t}|^{k}}$$
for every $s/t$ satisfying $|\arg(s/t)|\leq \frac{\pi}{2k}-\frac{\tilde \varepsilon}{2}$. 
\end{Uw}

By the above remark, the integral in (\ref{eq:ecalle}) is convergent if the function $g(s)$
is of exponential growth of order at most $k$.

\bigskip
Summarizing, one can use the modified $k$-summability method in the direction $d$ if $\textit{E}_{k,d}g$ is well defined,
i.e. if the following conditions are satisfied:
\begin{enumerate}
\item[(A)]
 $\hat{x}(t)\in\CC[[t]]_{1/k}$,
\item[(B)] $(\breve{B}_{k}\hat{x})(t)\in\OO^k(S_d)$, where $S_d=S(d,\varepsilon)$ is an unbounded sector in the direction $d$.
\end{enumerate}

Using the general theory of moment summability (see \cite[Section 6.5]{B2}) with a kernel function (see \cite[Section 11.1]{B2})
$$
e(z)=\frac{k}{1+k}z^{\frac{k}{1+k}}C_{(k+1)/k}(z^{\frac{k}{1+k}})
$$
of order $k$ and with a corresponding moment function
$$m(u)=\Gamma(1+u(1+\frac{1}{k}))/\Gamma(1+u),$$
by \cite[Theorem 38]{B2} one can conclude that the modified $k$-summability method is equivalent to the standard $k$-summability method
(see \cite[Section 6.2]{B2}).

For this reason, if the conditions (A) and (B) holds, then one can say that $\hat{x}(t)$ is 
\emph{$k$-summable in the direction $d$}. Moreover, the \emph{$k$-sum of $\hat{x}(t)$ in the direction $d$} is given by
$$x^d(t)=\Ss_{k,d}\hat{x}(t):=(\textit{E}_{k,d}\breve{B_{k}}\hat{x})(t).$$

\begin{Uw} 
\label{re:rem 3}
By Remark \ref{re:rem 2} for every  
$\tilde\varepsilon>0$ there exist $A_1,A_2>0$ such that $|C_{(k+1)/k}((s/t)^{\frac{k}{1+k}})\bigr|\leq
A_1\*e^{-A_2\*|\frac{s}{t}|^{k}}$ for every $s/t$ satisfying $|\arg(s/t)|\leq \frac{\pi}{2k}-\frac{\tilde \varepsilon}{2}$.
If $\arg s=\theta$ then 
$\arg t\in (\theta -\frac{\pi}{2k}+\frac{\tilde \varepsilon}{2}, \theta + \frac{\pi}{2k}-\frac{\tilde \varepsilon}{2})$.
If $g(s)=(\breve{B}_{k}\hat{x})(t)\in\OO^k(S_d)$, i.e. there exist $B_1,B_2>0$ such that $|g(s)|\leq B_1 e^{B_2|s|^k}$,
then
$$\Bigl|\int_{ e^{i\theta}\RR_+}g(s)\,C_{(k+1)/k}((s/t)^{\frac{k}{1+k}})\,ds^{\frac{k}{1+k}}\Bigr|\leq 
\int_0^{\infty}B_1 e^{B_2|s|^k}\cdot
A_1\*e^{-A_2\*|\frac{s}{t}|^{k}}\,ds^{\frac{k}{1+k}}.
$$
The integral above converges if $A_2|t|^{-k}>B_2,$ so $|t|<(\frac{A_2}{B_2})^{1/k}.$
Thus $(\textit{E}_{k,d}\,g)(t)$ is a well-defined holomorphic function for $t\in S(\theta,\frac{\pi}{k}-\tilde{\varepsilon},r)$,
where $r=(\frac{A_2}{B_2})^{1/k}$.
Since $\theta\in (d-\varepsilon/2,d+\varepsilon/2)$,
$$
x^d(t)=\Ss_{k,d}\hat{x}(t):=(\textit{E}_{k,d}\breve{B_{k}}\hat{x})(t)\sim_{1/k}\hat x(t)\quad \textrm{in}\quad
S(d,\frac{\pi}{k}+\varepsilon-\tilde\varepsilon,r).
$$
Thus for every $\hat{\varepsilon}\in (0,\varepsilon)$ there exists $r>0$ such that $x^d(t)\in\OO(S(d,\pi/k+\hat{\varepsilon},r))$.
\end{Uw}

\begin{Uw}
\label{re:unique}
By Watson's lemma (Theorem \ref{th:watson}), $k$-sum $x^d(t)$ is the unique holomorphic function on $S(d,\pi/k +\hat{\varepsilon},r)$ satisfying 
$$x^d(t)\sim_{1/k}\hat{x}(t)\,\,\mathrm{in}\,\,S(d,\pi/k+\hat{\varepsilon},r).$$
\end{Uw}

\begin{Df}
If $\hat{x}$ is $k$-summable in all directions $d$ but (after identification modulo $2\pi$) finitely many directions $d_1,\dots,d_n$ then
$\hat{x}$ is called \emph{$k$-summable} and $d_1,\dots,d_n$ are called \emph{singular directions of $\hat{x}$}.
\end{Df}

\begin{Uw}
 The notion of asymptotic expansion and $k$-summability can be extended in a natural way to the formal power series $\hat{u}(t,z)\in\OO(D_r)[[t]]$.
\end{Uw}

\section{The Stokes phenomenon for summable power series}
The concept of the Stokes phenomenon for $k$-summable formal power series $\hat{x}=\hat{x}(t)\in\CC[[t]]_{1/k}$ (resp.
$\hat{u}=\hat{u}(t,z)\in\OO(D)[[t]]_{1/k}$) is introduced by the following definition.
\begin{Df}
Assume that $\hat{x}\in\CC[[t]]_{1/k}$ (resp. $\hat{u}\in\OO(D)[[t]]_{1/k}$) is $k$-summable in all directions
$d\in (\phi-\varepsilon, \phi+\varepsilon)$,
but a singular direction $d=\phi$ (for some $\varepsilon>0$). A set $\mathcal{L}_{\phi}=\{t\in\tilde{\CC}\colon \arg t=\phi\}$ is called
the \emph{Stokes line for $\hat{x}$} (resp. $\hat{u}$).

Moreover, if $\phi^+$ (resp. $\phi^-$) denotes a direction close to $\phi$ and greater (resp. less) than $\phi$,
and $x^{\phi^+}=\Ss_{k,\phi^+}\hat{x}$ (resp. $x^{\phi^-}=\Ss_{k,\phi^-}\hat{x}$)
then the difference $x^{\phi^+}-x^{\phi^-}$ is called a \emph{jump for $\hat{x}$ across the Stokes line $\mathcal{L}_{\phi}$}. Analogously
we define a \emph{jump for $\hat{u}$ across the Stokes line $\mathcal{L}_{\phi}$}.
\end{Df}

\begin{Uw}
 Every Stokes line $\mathcal{L}_{\phi}$ for $\hat{x}$ (resp. $\hat{u}$) determines also so called \emph{anti-Stokes lines}
 $\mathcal{L}_{\phi\pm\frac{\pi}{2k}}$ for $\hat{x}$ (resp. $\hat{u}$), which are also often investigated.
\end{Uw}

\begin{Uw}
Assume that $S$ is a sector with an opening $\pi/k$ in a direction $\phi$. Let $f(t), g(t)\in\OO(S)$ be $k$-sums of $\hat{x}(t)$
in directions $\phi^-$ and $\phi^+$ respectively. It means that $f(t)\sim_{1/k}\hat{x}(t)$ and $g(t)\sim_{1/k}\hat{x}(t)$ on $S$.
Set $r(t):=|f(t)-g(t)|$ for all $t\in S$. Then $r(t)$ is minimal on the Stokes line $\mathcal{L}_{\phi}$ for $t$ close to zero
and satisfies inequalities: $|f(t)|\leq r(t)$ or $|g(t)|\leq r(t)$ on the anti-Stokes lines $\mathcal{L}_{\phi\pm\frac{\pi}{2k}}$.
\end{Uw}

\section{A maximal family of solutions}
In this section we describe a family of actual solutions of given non-Kowalevskian equation using $k$-sums of $k$-summable formal
power series solution. More precisely we consider the Cauchy problem
\begin{equation}
 \label{eq:pde}
 \begin{cases}
  P(\partial_t,\partial_z)u=0&\\
  \partial_t^j u(0,z)=\varphi_j(z)\in\OO(D),&j=0,\dots,m-1,
 \end{cases}
\end{equation}
where $P(\partial_t,\partial_z)=\partial_t^m-\sum_{j=1}^m\partial_t^{m-j}P_j(\partial_z)$ is a differential operator of order $m$
with respect to $\partial_t$. Since the principal part of the operator $P(\partial_t,\partial_z)$ with respect to $\partial_t$ is given by $\partial_t^m$,
the Cauchy problem (\ref{eq:pde}) has a unique formal power series solution $\hat{u}\in\OO(D)[[t]]$.
If we additionally assume that $\hat{u}$ is $k$-summable, then using the procedure of $k$-summability in nonsingular directions,
we obtain a family of actual solutions of (\ref{eq:pde})
on some sectors with opening greater than $\pi/k$ with respect to $t$. This motivates us to introduce the following definitions.

\begin{Df}
Let $S$ be a sector in the universal covering space $\tilde{\CC}$. A function $u\in\OO(S\times D)$ is called an \emph{actual solution}
of (\ref{eq:pde})
if it satisfies 
\begin{equation*}
 \begin{cases}
  P(\partial_t,\partial_z)u=0&\\
  \lim\limits_{t\to 0,\ t\in S}\partial_t^j u(t,z)=\varphi_j(z)\in\OO(D),&j=0,\dots,m-1.
 \end{cases}
\end{equation*}
\end{Df}

\begin{Df}
 \label{df:maximal}
 Assume that a unique formal power series solution $\hat{u}$ of (\ref{eq:pde}) is $k$-summable, $I$ is a finite set of indices, and $V$
 is a sector with an opening greater than $\pi/k$ on the Riemann surface of $t^{\frac{1}{q}}$ for some $q\in\QQ_+$.
 We say that $\{u_i\}_{i\in I}$ with $u_i\in\OO(V_i\times D)$ is a
 \emph{maximal family of solutions of (\ref{eq:pde}) on $V\times D$} if the following conditions hold:
 \begin{itemize}
  \item $V_i\subseteq V$ is a sector with an opening greater than $\pi/k$ for every $i\in I$.
  \item $\{V_i\}_{i\in I}$ is a covering of $V$. 
  \item $u_i\in\OO(V_i\times D)$ is an actual solution of (\ref{eq:pde}) for every $i\in I$.
  \item If $V_i\cap V_j \neq \emptyset$ then $u_i\not\equiv u_j$ on $(V_i\cap V_j)\times D$ for every $i,j\in I$, $i\neq j$.
  \item If $\tilde{u}\in\OO(\tilde{V}\times D)$ is an actual solution of (\ref{eq:pde}) for some sector $\tilde{V}\subseteq V$
  and $\tilde{u}=\Ss_{k,d}\hat{u}$ for some nonsingular direction $d$, then there exists $i\in I$ such that
  $\tilde{V}\cap V_i\neq\emptyset$ and $\tilde{u}\equiv u_i$
  on $(\tilde{V}\cap V_i)\times D$.
 \end{itemize}
\end{Df}

Now we are ready to describe a maximal family of solutions of (\ref{eq:pde})
\begin{Tw}
 \label{th:general}
 Let $\hat{u}$ be a $k$-summable formal power series solution of (\ref{eq:pde}) with a $k$-sum in a nonsingular direction $d$ given by
 $u^d=\Ss_{k,d}\hat{u}$.
 Assume that there exists $q\in\QQ_+$, which is the smallest positive rational number such that $u^d(t,z)=u^d(te^{2q\pi i},z)$ for every nonsingular direction $d$.
 Suppose that the set of singular directions of $\hat{u}$ modulo $2q\pi$ is given by $\{d_1,\dots,d_n\}$,
 where $0\leq d_1 < \dots< d_n<2q\pi$.
 Then Stokes lines for $\hat{u}$ are the sets $\mathcal{L}_{d_i}$ and anti-Stokes lines are the sets $\mathcal{L}_{d_i\pm\frac{\pi}{2k}}$
 for $i=1,\dots,n$.
 
 If additionally for every $i=1,\dots,n$ set $u_i:=\Ss_{k,\theta}\hat{u}$ with $\theta\in (d_{i},d_{i+1})$, where
 $d_{n+1}:=d_1+2q\pi$,
 then for every sufficiently small $\varepsilon>0$ there exists $r>0$ such that $u_i\in\OO(V_i(\varepsilon,r)\times D)$, where
 $$V_i(\varepsilon,r):=\{t\in W\colon |t|\in(0,r), \arg t \in (d_{i}-\frac{\pi}{2k}+\frac{\varepsilon}{2},d_{i+1}+\frac{\pi}{2k}-\frac{\varepsilon}{2}) \mod 2q\pi\}$$
 and $W$ is the Riemann surface of $t^{\frac{1}{q}}$.
 Moreover, $\{u_1,\dots,u_n\}$ with $u_i\in\OO(V_i(\varepsilon,r)\times D)$ is a maximal family of solutions of (\ref{eq:pde}) on $W_r\times D$, where
 $W_r=\{t\in W\colon 0<|t|<r\}$.
\end{Tw}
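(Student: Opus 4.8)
The plan is to fix a small $\varepsilon>0$, identify the holomorphy domain of each $u_i$, and then verify the five conditions of Definition~\ref{df:maximal} one at a time. The assertion on Stokes and anti-Stokes lines needs no work: by hypothesis the singular directions of $\hat u$ modulo $2q\pi$ are exactly $d_1,\dots,d_n$, so by the definitions of Section~3 the Stokes lines are the sets $\mathcal{L}_{d_i}$ and the anti-Stokes lines the sets $\mathcal{L}_{d_i\pm\pi/(2k)}$.

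Next I would show that $u_i:=\Ss_{k,\theta}\hat u$ does not depend on $\theta\in(d_i,d_{i+1})$ and compute its domain. For $\theta,\theta'$ close in $(d_i,d_{i+1})$ the sums $\Ss_{k,\theta}\hat u$ and $\Ss_{k,\theta'}\hat u$ are, by Remark~\ref{re:rem 3}, holomorphic on overlapping sectors of opening $>\pi/k$ on which both are $\sim_{1/k}\hat u$; their difference is $\sim_{1/k}0$ on a sector of opening $>\pi/k=\tfrac1k\pi$, hence vanishes by Watson's lemma (Theorem~\ref{th:watson}), and since $(d_i,d_{i+1})$ is connected all these sums glue into one function $u_i$. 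Letting $\theta$ range over the compact interval $[d_i+\varepsilon/2,\,d_{i+1}-\varepsilon/2]$, using Remark~\ref{re:rem 3} (with the opening $\hat\varepsilon_\theta$ and radius $r_\theta$ controlled by the distance of $\theta$ to $\{d_i,d_{i+1}\}$) and setting $r:=\inf_\theta r_\theta>0$, the union of these sectors is exactly $V_i(\varepsilon,r)$; thus $u_i\in\OO(V_i(\varepsilon,r)\times D)$, and $V_i(\varepsilon,r)\subseteq W_r$ is a sector of opening $(d_{i+1}-d_i)+\pi/k-\varepsilon>\pi/k$ as soon as $\varepsilon<d_{i+1}-d_i$. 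That $\{V_i\}$ covers $W_r$ follows since consecutive $V_i,V_{i+1}$ already overlap in the argument range $(d_{i+1}-\tfrac{\pi}{2k}+\tfrac\varepsilon2,\,d_{i+1}+\tfrac{\pi}{2k}-\tfrac\varepsilon2)$, nonempty for $\varepsilon<2\pi/k$, and running $i$ from $1$ to $n$ (with $d_{n+1}=d_1+2q\pi$ and using the $2q\pi$-periodicity hypothesis) sweeps all arguments modulo $2q\pi$. Each $u_i$ is an actual solution because $\Ss_{k,\theta}$ operates only in $t$, commuting with $\partial_z$ trivially and with $\partial_t$ by the standard properties of moment summation (\cite{B2}), so that $P(\partial_t,\partial_z)u_i=\Ss_{k,\theta}\bigl(P(\partial_t,\partial_z)\hat u\bigr)=0$, while $u_i\sim_{1/k}\hat u$ on $V_i$ gives $\lim_{t\to0,\,t\in V_i}\partial_t^j u_i(t,z)=\varphi_j(z)$ for $j=0,\dots,m-1$.

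The key step is the non-triviality of the family. Suppose $V_i\cap V_j\neq\emptyset$ with $i\neq j$, say $i<j$ after the appropriate $2q\pi$-shift, and suppose for contradiction that $u_i\equiv u_j$ on $(V_i\cap V_j)\times D$. Then $u_i$ and $u_j$ glue to a holomorphic $u$ on $V_i\cup V_j$ with $u\sim_{1/k}\hat u$, and one checks (the argument range of $V_i\cup V_j$ being the interval $(d_i-\tfrac{\pi}{2k}+\tfrac\varepsilon2,\,d_{j+1}+\tfrac{\pi}{2k}-\tfrac\varepsilon2)$, since the two sectors overlap) that $V_i\cup V_j$ contains a sector bisected by the singular direction $d_{i+1}$ of opening $2\min\{d_{i+1}-d_i,\,d_{j+1}-d_{i+1}\}+\pi/k-\varepsilon$, which exceeds $\pi/k$ once $\varepsilon$ is smaller than twice the smallest gap $d_{l+1}-d_l$. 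Hence $\hat u$ admits a holomorphic sum with $1/k$-Gevrey asymptotic expansion on a sector of opening $>\pi/k$ bisected by $d_{i+1}$, i.e.\ $\hat u$ is $k$-summable in the direction $d_{i+1}$ (\cite[Section 6.2]{B2}), contradicting that $d_{i+1}$ is singular.

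Finally, for maximality: if $\tilde u\in\OO(\tilde V\times D)$ is an actual solution with $\tilde u=\Ss_{k,d}\hat u$ for some nonsingular $d$, then $d\in(d_i,d_{i+1})$ modulo $2q\pi$ for a unique $i$, so $\Ss_{k,d}\hat u=u_i$ near the direction $d$ by the gluing above; consequently $\tilde V$ meets $V_i$, and $\tilde u$ and $u_i$, both analytic continuations of $\Ss_{k,d}\hat u$, coincide on the connected set $(\tilde V\cap V_i)\times D$. I expect the genuinely delicate point to be the non-triviality step --- converting an identity $u_i\equiv u_j$ into $k$-summability across a declared Stokes line --- while the only real bookkeeping is the geometry on $W$, in particular extracting a single radius $r$ that works uniformly for all directions in $[d_i+\varepsilon/2,\,d_{i+1}-\varepsilon/2]$.
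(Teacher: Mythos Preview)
Your proposal is correct and follows essentially the same route as the paper's proof: glue the sums $\Ss_{k,\theta}\hat u$ for $\theta$ between consecutive singular directions via Watson's lemma, verify the covering and actual-solution conditions, and for non-triviality argue that $u_i\equiv u_j$ on an overlap would produce a function with $1/k$-Gevrey asymptotic $\hat u$ on a sector of opening $>\pi/k$ bisected by the singular direction $d_{i+1}$, forcing $k$-summability there. The only minor deviation is in the verification that $u_i$ solves the PDE: the paper argues $P(\partial_t,\partial_z)u_i\sim_{1/k}P(\partial_t,\partial_z)\hat u=0$ via the compatibility of Gevrey asymptotics with differentiation (\cite[Theorems~18--20]{B2}) and then applies Watson's lemma on $V_i$, whereas you invoke commutation of the summation operator with $P$ directly---these are equivalent and equally standard.
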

\begin{proof}
 Immediately by the definition we conclude that $\mathcal{L}_{d_i}$ is a Stokes line and $\mathcal{L}_{d_i\pm\frac{\pi}{2k}}$ are 
 anti-Stokes lines. They play an important
 role in our description of the maximal family of solutions of (\ref{eq:pde}). To this end observe that for every sufficiently small
 $\varepsilon>0$ there exists $r>0$ such that
 $\Ss_{k,\theta}\hat{u}\in\OO(S(\theta,\frac{\pi}{k}-\varepsilon,r)\times D)$. Moreover, since by \cite[Lemma 10]{B2} 
 \begin{gather*}
  \Ss_{k,\theta_1}\hat{u}=\Ss_{k,\theta_2}\hat{u}\quad \textrm{for every} \quad \theta_1,\theta_2\in (d_{i},d_{i+1}),
 \end{gather*}
 the function $u_i$ is well defined and is
 analytically continued to the set $V_i(\varepsilon,r)\times D$.
 
 We will show that $\{u_1,\dots,u_n\}$ is a maximal family of solutions of (\ref{eq:pde}) on $W_r\times D$. First,
 we take such small $\varepsilon > 0$ that the opening of $V_i(\varepsilon,r)$ ($V_i$ for short) is greater than $\frac{\pi}{k}$
 for every $i=1,\dots,n$.
 Of course
 $\{V_i\}_{i=1,\dots,n}$ is a covering of $W_r$.
 
 Remark \ref{re:unique} implies that $u_i(t,z)\sim_{1/k} \hat{u}(t,z)$ in $V_i$.
 Hence by the Gevrey asymptotics properties \cite[Theorems 18--20]{B2} 
 \begin{gather*}
  P(\partial_t,\partial_z)u_i(t,z)\sim_{1/k} P(\partial_t,\partial_z)\hat{u}(t,z)=0\quad \textrm{in}\quad V_i.
 \end{gather*}
 Since the opening  of $V_i$ is greater than $\frac{\pi}{k}$, by Remark \ref{re:unique} we conclude that 
 $P(\partial_t,\partial_z)u_i=0$.
 Additionally, since 
 \begin{gather*}
  u_i(t,z)\sim_{1/k} \hat{u}(t,z)=\sum_{j=0}^{\infty}u_j(z)t^j\quad \textrm{in}\ V_i,
 \end{gather*}
 by \cite[Proposition 8]{B2} we get
 \begin{gather*}
   \lim\limits_{t\to 0, t\in V_i}\partial_t^ju_i(t,z)=j!u_j(z)=\varphi_j(z)\quad \textrm{for}\quad j=0,\dots,m-1.
 \end{gather*}
 Therefore $u_i $ is an actual solution of (\ref{eq:pde}) for $i=1,\dots,n$.
 
 Now, let us assume that $V_i\cap V_j\neq \emptyset$ and $u_i\equiv u_j$ on $(V_i\cap V_j)\times D$ for some $i,j\in\{1,\dots,n\}$ and $i\neq j$.
 It means that
 $u_i$ and $u_j$ are analytically continued to $\bar{u}\in\OO((V_i\cup V_j)\times D)$.
 We may assume that 
 $$V_i\cup V_j=\{t\in W\colon |t|\in(0,r), \arg t \in (d_{i}-\frac{\pi}{2k}+\frac{\varepsilon}{2},d_{j+1}+\frac{\pi}{2k}-\frac{\varepsilon}{2})\mod
 2q\pi\}.$$
 Hence, in particular $V_i,V_{i+1}\subset V_i\cup V_j$. Therefore, by Remark \ref{re:unique} we conclude that $\Ss_{k,d_{i+1}^-}\hat{u}=
 \tilde{u}=\Ss_{k,d_{i+1}^+}\hat{u}$. However, by \cite[Proposition 12]{B2} it means that also $\hat{u}$ is $k$-summable in a direction $d_{i+1}$,
 but $d_{i+1}$ is a singular direction.
 So, if $V_i\cap V_j\neq \emptyset$ then $u_i\not\equiv u_j$ on $(V_i\cap V_j)\times D$ for every $i,j\in\{1,\dots,n\}$, $i\neq j$.
 
 By the construction of the family $\{u_1,\dots,u_n\}$, the last condition in Definition \ref{df:maximal} is also satisfied,
 which completes the proof.
\end{proof}

\begin{Uw}
 One can treat a maximal family of solutions $\{u_i\}_{i\in I}$, $u_i\in\OO(V_i\times D)$ as a $k$-precise quasi-function $(\{u_i\},\{V_i\})$ introduced
 by J.-P. Ramis \cite{R}, i.e. as the analytic function which is defined modulo functions with exponential decrease of order $k$.
\end{Uw}

\section{The heat equation}
In this section we consider the initial value problem for the heat equation
\begin{equation}
 \label{eq:heat}
 \begin{cases}
  \partial_t u=\partial_z^2 u\\
  u(0,z)=\varphi(z).
 \end{cases}
\end{equation}
As easily seen, if $\varphi\in\OO(D)$ then this Cauchy problem has a unique formal solution
(which is in general divergent) $$\hat{u}(t,z)=\sum_{n=0}^{\infty}\frac{\varphi^{(2n)}(z)\,\* t^n}{n!}.$$
We use the following result about summable solutions of the heat equation and theirs integral representations
\begin{Tw}[{\cite[Theorem 3.1]{L-M-S} and \cite[Theorem 4.2]{Mic9}}]
\label{th:heat}
Let $d\in\RR$. Suppose that $\hat{u}$ is a unique formal solution of the Cauchy problem of the heat equation (\ref{eq:heat}) with
\begin{gather}
\label{eq:heat_cond}
\varphi\in\OO^2 \biggl(D\cup S(\frac{d}{2},\frac{\varepsilon}{2})\cup S(\frac{d}{2}+\pi,\frac{\varepsilon}{2})\biggr)\quad
\textrm{for\,\, some}\quad \varepsilon>0.
\end{gather}
Then $\hat{u}$ is $1$-summable in the direction $d$ and for every $\theta\in(d-\frac{{\varepsilon}}{2},d+\frac{{\varepsilon}}{2})$ and
for every $\tilde{\varepsilon}\in(0,\varepsilon)$ there exists
$r>0$ such that  its
$1$-sum $u^{\theta}\in\OO(S(\theta,\pi-\tilde{\varepsilon},r)\times D)$ is represented by
\begin{equation}
 \label{eq:heat_solution}
u(t,z)=u^{\theta}(t,z)=
\frac{1}{\sqrt{4\*\pi\*t}}\*\int_{e^{i\*\frac{\theta}{2}}\*\RR_+}\,\,\bigl(\varphi(z+s)+\varphi(z-s)\bigr)\,\*e^{\frac{-s^2}{4t}}ds
\end{equation}
for $t\in S(\theta, \pi-\tilde{\varepsilon},r)$ and $z\in D$.

Moreover, for every $\bar{\varepsilon}\in(0,\varepsilon)$ there exists $r>0$ such that $u\in\OO(S(d,\pi+\bar{\varepsilon},r)\times D)$.
\end{Tw}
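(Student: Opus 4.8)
The plan is to verify the two summability conditions (A) and (B) for $\hat u$ with $k=1$, and then to evaluate the resulting Ecalle integral $E_{1,d}\breve B_1\hat u$ in closed form so as to recover (\ref{eq:heat_solution}); the uniqueness and the enlarged domain of holomorphy then follow from Watson's lemma and the convergence estimate of Remark \ref{re:rem 3}.

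First, condition (A): Cauchy's estimates applied to $\varphi\in\OO^2(D)$ on a fixed disc give $|\varphi^{(2n)}(z)|\le C\,(2n)!\,\rho^{-2n}$, hence $|\varphi^{(2n)}(z)/n!|\le C\,4^n n!\,\rho^{-2n}$, so $\hat u(t,z)=\sum_n\frac{\varphi^{(2n)}(z)}{n!}t^n\in\OO(D)[[t]]_1$. For the Borel transform, since $\Gamma(1+2n)=(2n)!$ one gets $(\breve B_1\hat u)(t,z)=\sum_{n\ge0}\frac{\varphi^{(2n)}(z)}{(2n)!}t^n$; substituting $t=\tau^2$ and recognizing the even part of the Taylor expansion of $\varphi$ at $z$ gives the crucial closed form
$$(\breve B_1\hat u)(t,z)=\tfrac{1}{2}\bigl(\varphi(z+\sqrt{t})+\varphi(z-\sqrt{t})\bigr),$$
where $\sqrt{t}$ is the branch with argument close to $d/2$. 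Condition (B) is then checked as follows: for $\arg t$ in a narrow unbounded sector $S_d=S(d,\varepsilon)$, the points $z+\sqrt{t}$ and $z-\sqrt{t}$ ($z\in D$) stay near the origin for small $|t|$ and run into thin sectors about the directions $d/2$ and $d/2+\pi$ for large $|t|$; these sets are contained in $D\cup S(\frac{d}{2},\frac{\varepsilon}{2})$ and $D\cup S(\frac{d}{2}+\pi,\frac{\varepsilon}{2})$ once $\varepsilon$ is small enough, so (\ref{eq:heat_cond}) makes $\breve B_1\hat u(\cdot,z)$ holomorphic on $S_d$. The bound $|\varphi(w)|\le C_1 e^{C_2|w|^2}$ then yields $|\breve B_1\hat u(t,z)|\le C_1'e^{C_2'|t|}$ on proper subsectors, i.e. $\breve B_1\hat u(\cdot,z)\in\OO^1(S_d)$ uniformly in $z\in D$. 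Hence $\hat u$ is $1$-summable in the direction $d$ with $1$-sum $u^d=E_{1,d}\breve B_1\hat u$.

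It remains to put this sum into the form (\ref{eq:heat_solution}). The key computation is that, for $k=1$, the kernel $C_2(\tau)=\sum_n\frac{(-\tau)^n}{n!\,\Gamma((1-n)/2)}$ collapses: the odd-index terms vanish and the identity $\Gamma(\tfrac{1}{2}-m)=(-4)^m m!\sqrt{\pi}/(2m)!$ applied to the even ones gives $C_2(\tau)=\pi^{-1/2}e^{-\tau^2/4}$. Inserting this into (\ref{eq:ecalle}) with $k=1$ and $g=\breve B_1\hat u$, and changing the variable $s=\sigma^2$ (so $ds^{1/2}=d\sigma$ and $(s/t)^{1/2}=\sigma/\sqrt{t}$), turns $E_{1,d}g$ into $\frac{1}{2\sqrt{\pi t}}\int_{e^{i\theta/2}\RR_+}(\varphi(z+\sigma)+\varphi(z-\sigma))e^{-\sigma^2/4t}\,d\sigma$, which is exactly (\ref{eq:heat_solution}). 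The domain is read off as in Remark \ref{re:rem 3}: the integrand decays once $|\arg(\sigma^2/t)|<\pi/2$, i.e. $|\theta-\arg t|<\pi/2$, and the integral converges provided $|t|$ is small enough for the Gaussian to dominate the $\OO^2$-growth of $\varphi$, giving $u^\theta\in\OO(S(\theta,\pi-\tilde\varepsilon,r)\times D)$; taking the union over $\theta\in(d-\frac{\varepsilon}{2},d+\frac{\varepsilon}{2})$ and shrinking $r$ gives $u\in\OO(S(d,\pi+\bar\varepsilon,r)\times D)$, and Watson's lemma (Remark \ref{re:unique}) identifies $u^\theta$ as the unique function asymptotic to $\hat u$ of Gevrey order $1$, in particular independent of $\theta$ within one sector between consecutive singular directions.

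The main obstacle is condition (B): making rigorous that $\breve B_1\hat u(\cdot,z)$ is holomorphic on a genuinely unbounded sector in $t$ and of exponential order exactly $1$ there, uniformly for $z\in D$, while keeping careful track of the relations between the opening $\varepsilon$ of $S_d$, the half-openings $\varepsilon/2$ of the two sectors in (\ref{eq:heat_cond}), and the final radius $r$. An alternative that avoids evaluating $C_2$ explicitly is to prove directly that the right-hand side of (\ref{eq:heat_solution}) is holomorphic on the claimed domain and satisfies $u^\theta\sim_1\hat u$ there — using $\int_{e^{i\theta/2}\RR_+}\sigma^{2n}e^{-\sigma^2/4t}\,d\sigma=\tfrac{1}{2}(4t)^{n+1/2}\Gamma(n+\tfrac{1}{2})$ together with $\Gamma(n+\tfrac{1}{2})=(2n)!\sqrt{\pi}/(4^n n!)$ to recover the coefficients $\varphi^{(2n)}(z)t^n/n!$, and a Taylor-remainder estimate (Cauchy estimates for small $\sigma$, the $\OO^2$ bound for large $\sigma$) for the Gevrey-$1$ error — and then invoking Watson's lemma for uniqueness; this is in essence the argument of \cite{L-M-S} and \cite{Mic9}.
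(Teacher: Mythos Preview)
Your argument is correct. The paper does not give its own proof of Theorem~\ref{th:heat} (it is quoted from \cite{L-M-S} and \cite{Mic9}), but your approach is precisely the specialization $p=1$, $q=2$ of the proof the paper does supply for the generalization in Theorem~\ref{th:generalization}: compute $\breve B_{1}\hat u(t,z)=\tfrac{1}{2}(\varphi(z+\sqrt t)+\varphi(z-\sqrt t))$, read off the $\OO^{1}$-growth from (\ref{eq:heat_cond}), apply $E_{1,\theta}$, and change variables $\tilde s=\sqrt s$. The one ingredient you add beyond that proof is the explicit identification $C_{2}(\tau)=\pi^{-1/2}e^{-\tau^{2}/4}$, which is exactly what is needed to pass from the general integral representation (\ref{eq:generalization_solution}) with $C_{q/p}$ to the Gaussian kernel in (\ref{eq:heat_solution}); your computation of it via $\Gamma(\tfrac12-m)=(-4)^{m}m!\sqrt\pi/(2m)!$ is correct. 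The alternative route you sketch at the end (verify directly that the right-hand side of (\ref{eq:heat_solution}) has $\hat u$ as Gevrey-$1$ asymptotic and invoke Watson's lemma) is indeed closer to the original argument in \cite{L-M-S}.
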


Now we describe the Stokes phenomenon and the maximal family of solutions in the case when the Cauchy datum $\varphi$ has a simple pole
at the point $z_0\in\CC\setminus\{0\}$.
\begin{Tw}
\label{th:main}
Assume that the Cauchy datum of (\ref{eq:heat}) is given by
\begin{gather}
\label{eq:heat_data}
\varphi(z)=\frac{a}{z-z_0}+\tilde{\varphi}(z)\quad \text{for some} \quad a,z_0 \in\CC\setminus\{0\} \quad \text{and}\quad
\tilde{\varphi}(z)\in\OO^{2}(\CC). 
\end{gather}
Set $\delta:=2\*\arg z_0$, $u_1:= u^{\theta}$ for
$\theta\in(\delta,\delta+2\pi)\mod4\pi$ and $u_2:= u^{\theta}$ for $\theta\in(\delta + 2\pi,\delta+4\pi)\mod4\pi$,
where $u^{\theta}$ is a solution of (\ref{eq:heat}) given by (\ref{eq:heat_solution}). Finally for $r>0$ denote $W_r=
\{t\in W\colon 0<|t|<r\}$, where $W$ is the Riemann surface of the square root function.

Then for every $\varepsilon>0$ there exists $r>0$ such that $u_l\in\OO(V_l\times D)$ ($l=1,2$), where $V_1=S(\delta+\pi, 3\pi-\varepsilon,r)$, 
$V_2=S(\delta+3\pi, 3\pi-\varepsilon,r)$, and $\{u_1,u_2\}$ is a maximal family of solutions of (\ref{eq:heat}) on $W_r\times D$.
 
Moreover, if $\hat{u}$ is a formal solution of (\ref{eq:heat}) then Stokes lines for $\hat{u}$ are the sets $\mathcal{L}_{\delta}$ and
$\mathcal{L}_{\delta+2\pi}$, and anti-Stokes lines for $\hat{u}$ are the sets $\mathcal{L}_{\delta\pm\frac{\pi}{2}}$ and
$\mathcal{L}_{\delta+2\pi\pm\frac{\pi}{2}}$. Jumps across the Stokes lines $\mathcal{L}_{\delta}$ and $\mathcal{L}_{\delta+2\pi}$
are given respectively by
\begin{itemize}
 \item $u_1(t,z)-u_2(t,z)=u^{\delta^+}(t,z)-u^{\delta^-}(t,z)=-i\*\sqrt{\pi/t}\*a\*e^{-\frac{(z_0-z)^2}{4t}}$\\
 for $(t,z)\in S(\delta,\pi-\varepsilon,r)\times D$,
 \item $u_2(t,z)-u_1(t,z)=u^{(\delta + 2\pi)^+}(t,z)-u^{(\delta +2\pi)^-}(t,z)\\
 =i\*\sqrt{\pi/t}\*a\*e^{-\frac{(z_0-z)^2}{4t}}$
for $(t,z)\in S(\delta+2\pi,\pi-\varepsilon,r)\times D$.
\end{itemize}
\end{Tw}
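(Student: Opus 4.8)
The plan is to strip off the polar part of the Cauchy datum, use Theorem~\ref{th:heat} to pin down the singular directions of $\hat u$, compute the two jumps by a residue argument inside the integral representation \eqref{eq:heat_solution}, and then obtain the whole statement as a special case of Theorem~\ref{th:general} with $k=1$.

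First I would write $\varphi=\psi+\tilde\varphi$ with $\psi(z)=\frac{a}{z-z_0}$. By linearity of \eqref{eq:heat} the formal solution splits as $\hat u=\hat u_\psi+\hat u_{\tilde\varphi}$, where $\hat u_{\tilde\varphi}$ is the formal solution with datum $\tilde\varphi$. Since $\tilde\varphi\in\OO^{2}(\CC)$, condition \eqref{eq:heat_cond} is satisfied by $\tilde\varphi$ in \emph{every} direction, so by Theorem~\ref{th:heat} the series $\hat u_{\tilde\varphi}$ is $1$-summable in every direction and has no singular direction; hence the singular directions of $\hat u$ are exactly those of $\hat u_\psi$. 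Next, $\psi$ is holomorphic on $\CC\setminus\{z_0\}$ and $O(|z|^{-1})$ at infinity, hence $\psi\in\OO^{2}(G)$ for every subsector $G$ of $\CC$ whose closure omits $z_0$, while it cannot be continued through $z_0$. Therefore \eqref{eq:heat_cond} holds for $\psi$, for some $\varepsilon>0$, precisely when neither of the rays $\arg z=\frac{d}{2}$ and $\arg z=\frac{d}{2}+\pi$ meets $z_0$; since $\frac{d}{2}$ enters only through $e^{i\frac{d}{2}}$, this depends on $d$ only modulo $4\pi$, and (recalling $\delta=2\arg z_0$) it amounts to $d\not\equiv\delta$ and $d\not\equiv\delta+2\pi\pmod{4\pi}$. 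Thus, by Theorem~\ref{th:heat}, for every such $d$ the series $\hat u$ is $1$-summable in the direction $d$ with $1$-sum $u^{d}=\Ss_{1,d}\hat u$ given by \eqref{eq:heat_solution}, and the set of singular directions of $\hat u$ is contained in $\{\delta,\delta+2\pi\}$ modulo $4\pi$.

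For the jumps, fix a small $\eta>0$ and $t$ with $\arg t$ close to $\delta$, and subtract the representations \eqref{eq:heat_solution} for $u^{\delta+\eta}$ and $u^{\delta-\eta}$. The two rays $e^{i(\delta\pm\eta)/2}\RR_+$ bound a thin sector on whose far arc $\mathrm{Re}(s^{2}/t)\to+\infty$, so the arc contribution vanishes and the residue theorem gives $u^{\delta^{+}}-u^{\delta^{-}}$ equal to $\frac{1}{\sqrt{4\pi t}}$ times $-2\pi i$ times the sum of the residues inside that sector of $\bigl(\varphi(z+s)+\varphi(z-s)\bigr)e^{-s^{2}/(4t)}$, viewed as a function of $s$. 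The parts $\tilde\varphi(z\pm s)$ are entire in $s$; the pole $s=z-z_0$ of $\frac{a}{z-s-z_0}$ lies in the direction $\approx\frac{\delta}{2}+\pi$, hence outside the thin sector; and only the pole $s=z_0-z$ of $\frac{a}{z+s-z_0}$ is enclosed (for $z$ in a sufficiently small disc $D$, since $\arg(z_0-z)\to\frac{\delta}{2}$ as $z\to0$), with residue $a\,e^{-(z_0-z)^{2}/(4t)}$. This yields $u^{\delta^{+}}-u^{\delta^{-}}=-i\sqrt{\pi/t}\,a\,e^{-(z_0-z)^{2}/(4t)}$ on $S(\delta,\pi-\varepsilon,r)\times D$. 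The computation at $\mathcal L_{\delta+2\pi}$ is identical with the roles of the two simple poles interchanged: now $s=z-z_0$ is the enclosed pole, its residue is $-a\,e^{-(z_0-z)^{2}/(4t)}$, and one obtains $u^{(\delta+2\pi)^{+}}-u^{(\delta+2\pi)^{-}}=i\sqrt{\pi/t}\,a\,e^{-(z_0-z)^{2}/(4t)}$ on $S(\delta+2\pi,\pi-\varepsilon,r)\times D$ (with $\sqrt{1/t}$ on the appropriate sheet). In particular both jumps are nonzero, so $\delta$ and $\delta+2\pi$ are genuine singular directions, and the set of singular directions of $\hat u$ modulo $4\pi$ is exactly $\{\delta,\delta+2\pi\}$.

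Finally, from \eqref{eq:heat_solution} one reads off $u^{d}(te^{4\pi i},z)=u^{d}(t,z)$ (both the ray $e^{i\theta/2}\RR_+$ and the factor $(4\pi t)^{-1/2}$ are unchanged under $\theta\mapsto\theta+4\pi$), while the period $2\pi$ is excluded because the two chambers produce the distinct functions $u_1,u_2$ by the nonvanishing of the jumps; hence the minimal $q$ of Theorem~\ref{th:general} equals $2$ and $W$ is the Riemann surface of $\sqrt t$. Applying Theorem~\ref{th:general} with $k=1$, $q=2$, $n=2$ and singular directions $\{\delta,\delta+2\pi\}$ gives at once that the Stokes lines are $\mathcal L_\delta,\mathcal L_{\delta+2\pi}$, the anti-Stokes lines are $\mathcal L_{\delta\pm\frac{\pi}{2}},\mathcal L_{\delta+2\pi\pm\frac{\pi}{2}}$, and that $\{u_1,u_2\}$ with $u_l\in\OO(V_l\times D)$, $V_1=V_1(\varepsilon,r)=S(\delta+\pi,3\pi-\varepsilon,r)$ and $V_2=V_2(\varepsilon,r)=S(\delta+3\pi,3\pi-\varepsilon,r)$, is a maximal family of solutions of \eqref{eq:heat} on $W_r\times D$; here $u_1,u_2$ coincide with the functions of the statement because the $1$-sum $u^{\theta}$ is locally constant on each chamber by \cite[Lemma~10]{B2}. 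Identifying $u^{\delta^{+}}=u_1$, $u^{\delta^{-}}=u_2$ and $u^{(\delta+2\pi)^{+}}=u_2$, $u^{(\delta+2\pi)^{-}}=u_1$ according to the chambers $(\delta,\delta+2\pi)$ and $(\delta+2\pi,\delta+4\pi)$ turns the two formulas above into the two displayed jump identities. I expect the residue computation of the third paragraph to be the only real difficulty: one must check carefully which poles of $\varphi(z+s)+\varphi(z-s)$ in the variable $s$ fall into the thin sector swept between the two integration rays near each Stokes direction, get the orientation (hence the sign of the residue) right, verify the vanishing of the arcs at infinity, and make the choice of the radius of $D$ — equivalently, the relation between $\eta$, $\varepsilon$ and the final $r$ — uniform in $z\in D$.
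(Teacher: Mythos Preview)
Your proposal is correct and follows essentially the same route as the paper: verify \eqref{eq:heat_cond} (you do it via the splitting $\varphi=\psi+\tilde\varphi$, the paper checks it directly for $\varphi$), compute the jumps by closing the contour between the two rays $e^{i\theta_1/2}\RR_+$ and $e^{i\theta_2/2}\RR_+$ and applying the residue theorem to the integrand of \eqref{eq:heat_solution}, and then invoke Theorem~\ref{th:general} with $k=1$, $q=2$. The paper packages the residue step as the general formula \eqref{eq:difference} and then runs three cases (no pole, pole $z_0-z$, pole $z-z_0$), but the content is identical; your explicit remark that the nonvanishing of the jumps forces $\delta$ and $\delta+2\pi$ to be genuine singular directions (hence $q=2$ and not $q=1$) is in fact a point the paper leaves implicit.
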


\begin{Uw}
 \label{re:minus}
 Without loss of generality we may assume in Theorem \ref{th:main} that $\arg z_0\in[0,\pi)$. The other case $\arg(-z_0)\in[0,\pi)$ is reduced to the previous one
 by the substitution $\tilde{u}(t,z):=-u(t,-z)$ or, equivalently, by the replacement of $a$ by $-a$ in (\ref{eq:heat_data}).
\end{Uw}

\begin{proof}[Proof of Theorem \ref{th:main}]
Let $\hat{u}$ be a formal solution of (\ref{eq:heat}) with the Cauchy datum $\varphi$ given by (\ref{eq:heat_data}).
Observe that, if $d\neq\delta\mod 2\pi$ then $\varphi$ satisfies assumption (\ref{eq:heat_cond}).

Hence by Theorem \ref{th:heat}, $\hat{u}$ is $1$-summable in a direction $\theta\in\RR$, $\theta\neq \delta\mod 2\pi$ and
for every $\varepsilon>0$ there exists $r>0$ such that its $1$-sum satisfies
$$u^{\theta}(t,z)=\frac{1}{\sqrt{4\pi\*t}}\*\int_{e^{\frac{i\theta}{2}}\RR_+}\bigl(\varphi(z+\tilde{s})+\varphi(z-\tilde{s})\bigr)
\*e^{-{\tilde{s}}^2/4t}\,d\tilde{s}\,\,
\mathrm{for}\ t\in S(\theta,\pi-\varepsilon,r).$$
Observe that $q=2$ is the smallest positive rational number for which $u^{\theta}(t,z)=u^{\theta}(te^{2q\pi i},z)$.
Moreover, the set of singular directions of $\hat{u}(t,z)$ modulo $4\pi$ is given by $\delta$ and $\delta+2\pi$. Hence by Theorem \ref{th:general},
$\{u_1,u_2\}$ with $u_l\in\OO(V_l\times D)$ ($l=1,2$) is a maximal family of solutions of (\ref{eq:heat}). Moreover,
Stokes lines for $\hat{u}$ are the sets $\mathcal{L}_{\delta}$ and
$\mathcal{L}_{\delta+2\pi}$, and anti-Stokes lines for $\hat{u}$ are the sets $\mathcal{L}_{\delta\pm\frac{\pi}{2}}$.

To calculate jumps across the Stokes lines, take $\theta_1,\theta_2 \neq \delta\mod 2\pi$ and suppose $0<\theta_2-\theta_1<\pi$.
We will show that
\begin{equation}
 \label{eq:difference}
 u^{\theta_2}(t,z)-u^{\theta_1}(t,z)=-2\pi\*i\*\sum_{\tilde z\in B(G,\varphi,z)}\mathrm{res}_{\tilde s=\tilde z}\,
 \Bigl[\frac{1}{\sqrt{4\pi\*t}}\bigl(\varphi(z+\tilde{s})+\varphi(z-\tilde{s})\bigr)\*e^{-{\tilde{s}}^2/4t}\Bigr]
\end{equation}
for $t\in S(\theta_1,\pi-\varepsilon,r)\cap S(\theta_2,\pi-\varepsilon,r)$ and $z\in D$,
 where $G=G(\frac{\theta_1}{2},\frac{\theta_2}{2})=\{z\in\tilde{\CC}\colon \arg z\in(\frac{\theta_1}{2},\frac{\theta_2}{2})\}$ and 
 $B(G,\varphi,z)$ is the set of all points in the sector $G$
at which the function $\tilde{s}\mapsto\frac{1}{\sqrt{4\pi\*t}}\bigl(\varphi(z+\tilde{s})+\varphi(z-\tilde{s})\bigr)\*e^{-{\tilde{s}}^2/4t}$
has poles.\\

To this end for $r>|z_0-z|$ set  $C_r=C_{1,r}\cup C_{2,r}\cup C_{3,r}$,\,\,  where $C_{1,r}=\{tr\*e^{\frac{i\*\theta_2}{2}}:t\in[0,1]\}$,
$C_{2,r}=\{r\*e^{\frac{i(t\theta_1+(1-t)\theta_2)}{2}}:t\in[0,1]\}$ and $C_{3,r}=\{(1-t)r\*e^{\frac{i\*\theta_1}{2}}:t\in[0,1]\}$.
The contour $C_r$ is shown in Figure \ref{fig:1}.

\begin{figure}[h]
\centering
 \includegraphics[width=.7\textwidth]{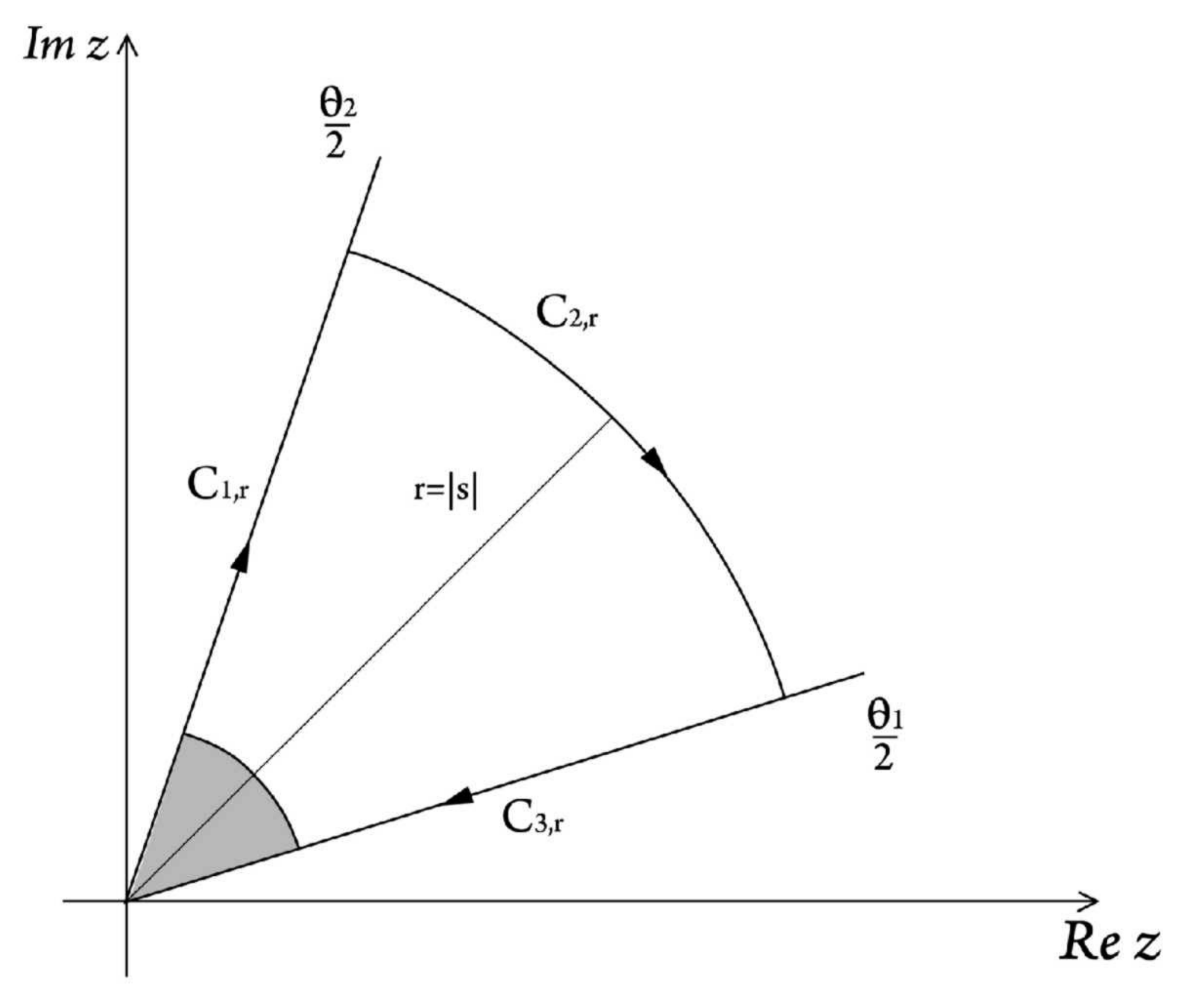}
\caption{The contour $C_r$ in the complex plane.}
\label{fig:1}
\end{figure}

Since $\varphi\in\OO^2(\CC)$, we get
\begin{multline*}
u^{\theta_2}(t,z)-u^{\theta_1}(t,z)
=\lim_{r\longrightarrow\infty}\Biggl[\int_{C_{1,r}}\frac{1}{\sqrt{4\pi\*t}}\bigl(\varphi(z+\tilde{s})+\varphi(z-\tilde{s})\bigr)
\*e^{-{\tilde{s}}^2/4t}\,d\tilde{s}\\
+\int_{C_{3,r}}\frac{1}{\sqrt{4\pi\*t}}\bigl(\varphi(z+\tilde{s})+\varphi(z-\tilde{s})\bigr)\*e^{-{\tilde{s}}^2/4t}\,d\tilde{s}\Biggr],
\end{multline*}
and
\begin{multline*}
\int_{C_r}\frac{1}{\sqrt{4\pi\*t}}\bigl(\varphi(z+\tilde{s})+\varphi(z-\tilde{s})\bigr)\*e^{-{\tilde{s}}^2/4t}\,d\tilde{s}\\
=\int_{C_{1,r}}\frac{1}{\sqrt{4\pi\*t}}\bigl(\varphi(z+\tilde{s})+\varphi(z-\tilde{s})\bigr)\*e^{-{\tilde{s}}^2/4t}\,d\tilde{s}\\ 
 + \int_{C_{2,r}}\frac{1}{\sqrt{4\pi\*t}}\bigl(\varphi(z+\tilde{s})+\varphi(z-\tilde{s})\bigr)\*e^{-{\tilde{s}}^2/4t}\,d\tilde{s}\\ 
+\int_{C_{3,r}}\frac{1}{\sqrt{4\pi\*t}}\bigl(\varphi(z+\tilde{s})+\varphi(z-\tilde{s})\bigr)\*e^{-{\tilde{s}}^2/4t}\,d\tilde{s},
\end{multline*}
also
\begin{multline*}
\Bigg|\int_{C_{2,r}}\frac{1}{\sqrt{4\pi\*t}}\bigl(\varphi(z+\tilde{s})+\varphi(z-\tilde{s})\bigr)\*e^{-{\tilde{s}}^2/4t}\,d\tilde{s}\Bigg| \\
\leq\, \frac{\theta_2-\theta_1}{4\sqrt{\pi\*t}}
\,\mathrm{sup}_{|\tilde{s}|=r}\Bigg|(\varphi(z+\tilde{s})+\varphi(z-\tilde{s})\bigr)\*e^{-{\tilde{s}}^2/4t}\Bigg|\,\*r\\
\leq A\*e^{-Br^2}\,\*r \longrightarrow 0\quad \mathrm{as}\quad r\longrightarrow\infty\quad (\textrm{for some}\ A,B>0),
\end{multline*}

By the residue theorem we obtain
\begin{multline*}
\int_{C_r}\frac{1}{\sqrt{4\pi\*t}}\bigl(\varphi(z+\tilde{s})+\varphi(z-\tilde{s})\bigr)\*e^{-{\tilde{s}}^2/4t}\,d\tilde{s}\\
=-2\pi\*i\*\sum_{\tilde z\in B(G_r,\varphi,z)}\mathrm{res}_{\tilde s=\tilde z}\,\,
\Bigl[\frac{1}{\sqrt{4\pi\*t}}\bigl(\varphi(z+\tilde{s})+\varphi(z-\tilde{s})\bigr)\*e^{-{\tilde{s}}^2/4t}\Bigr],
\end{multline*}
where $G_r=\{z\in\tilde{\CC}\colon |z|<r,\ \arg z\in(\frac{\theta_1}{2},\frac{\theta_2}{2})\}$.

Hence
\begin{multline*}
u^{\theta_2}(t,z)-u^{\theta_1}(t,z)=\lim_{r\longrightarrow\infty}\Biggl[\int_{C_{1,r}}\frac{1}{\sqrt{4\pi\*t}}\bigl(\varphi(z+\tilde{s})+
\varphi(z-\tilde{s})\bigr)\*e^{-{\tilde{s}}^2/4t}\,d\tilde{s}\\
+\int_{C_{3,r}}\frac{1}{\sqrt{4\pi\*t}}\bigl(\varphi(z+\tilde{s})+\varphi(z-\tilde{s})\bigr)\*e^{-{\tilde{s}}^2/4t}\,d\tilde{s}\Biggr]\\
=\lim_{r\longrightarrow\infty}\Bigg[\int_{C_r}\frac{1}{\sqrt{4\pi\*t}}
\bigl(\varphi(z+\tilde{s})+\varphi(z-\tilde{s})\bigr)\*e^{-{\tilde{s}}^2/4t}\,d\tilde{s}\\
-\int_{C_{2,r}}\frac{1}{\sqrt{4\pi\*t}}\bigl(\varphi(z+\tilde{s})+\varphi(z-\tilde{s})\bigr)\*e^{-{\tilde{s}}^2/4t}\,d\tilde{s}\Bigg]\\
=-2\pi\*i\*\sum_{\tilde z\in B(G,\varphi,z)}\mathrm{res}_{\tilde s=\tilde z}\,
\Bigl[\frac{1}{\sqrt{4\pi\*t}}\bigl(\varphi(z+\tilde{s})+\varphi(z-\tilde{s})\bigr)\*e^{-{\tilde{s}}^2/4t}\Bigr],
\end{multline*}
which gives (\ref{eq:difference}).

Now we consider the following cases:
\bigskip\par
Case 1: Let $\theta_1,\theta_2\in (\delta,\delta+2\pi)$.
Without loss of generality we may assume that $\theta_2>\theta_1$. 
Additionally we also assume that $\theta_2-\theta_1<\pi.$
Since $\theta_1,\theta_2\in(\delta,\delta+2\pi)$, there exists $ \tilde{r}>0$ such that  $ \{B(z_0,\tilde{r})\cup B(-z_0, \tilde{r})\}\cap G=
\emptyset$, where $B(a,r)=\{z\in\CC: |z-a|<r\}$. Then we see that for $z\in D_{\tilde{r}}$ the set $B(G,\varphi,z)$ is empty,
and hence by (\ref{eq:difference})
$$u^{\theta_2}(t,z)=u^{\theta_1}(t,z).$$ 
In the general case we may take an auxiliary direction $\overline{\theta}\in(\theta_1,\theta_2)$ such that $\theta_2-\overline{\theta}<\pi$
and $\overline{\theta}-\theta_1<\pi$. Repeating the previous considerations we also conclude that
$$u^{\theta_2}(t,z)=u^{\overline{\theta}}(t,z)=u^{\theta_1}(t,z).$$

We obtain the same result for $\theta_1,\theta_2\in (\delta+2\pi,\delta+4\pi)$.
Hence the functions $u_1:=u^{\theta}$ for $\theta\in(\delta,\delta+2\pi)$ and $u_2:=u^{\theta}$ for
$\theta\in(\delta+2\pi,\delta+4\pi)$ are well defined, i.e they do not depend on $\theta$.
\begin{figure}[h]
\centering
 \includegraphics[width=\textwidth]{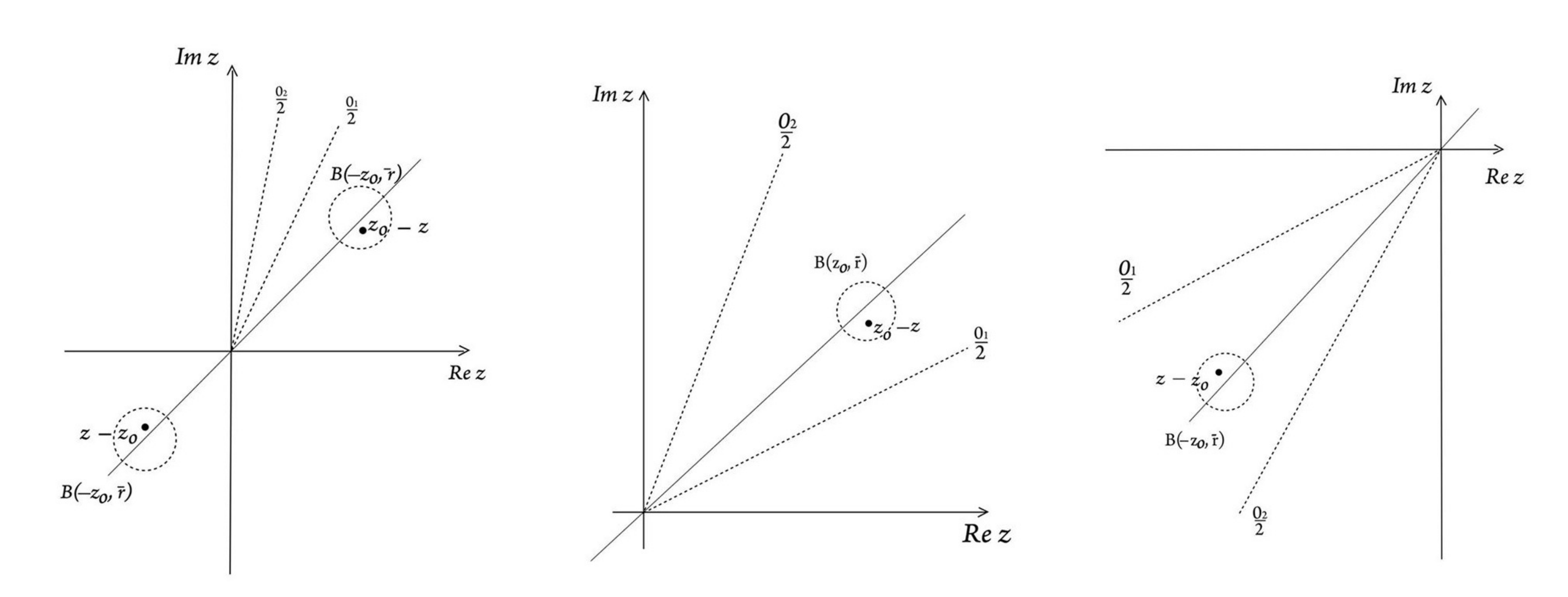}
\caption{From the left: Case 1, Case 2, Case 3.}
\end{figure}
\bigskip\par
Case 2: Let $\theta_1<\delta<\theta_2$. Then there exists $\tilde r>0$ such that $B(z_0,\tilde r)\subset G$. 

Hence we get ${B(G,\varphi,z)=\{z_0-z\}}$ for $z\in D_{\tilde{r}}$. 
Consequently, by (\ref{eq:difference}), we obtain
\begin{multline*}
u^{\theta_2}(t,z)-u^{\theta_1}(t,z)=
-2\pi\*i\,\*\mathrm{res}_{\tilde{s}=z_0-z}\,\Bigl[\frac{1}{\sqrt{4\pi\*t}}\bigl(\varphi(z+\tilde{s})+\varphi(z-\tilde{s})\bigr)
\*e^{-{\tilde{s}}^2/4t}\Bigr]\\
=-i\*\sqrt{\pi/t}\lim_{\tilde{s}\rightarrow z_0-z}(\tilde{s}-(z_0-z))\Bigl[ \bigl(\frac{a}{\tilde{s}-(z_0-z)}+ 
\tilde{\varphi}(z+\tilde{s})+ \frac{a}{z-\tilde{s}-z_0}+\tilde{\varphi}(z-\tilde{s})\bigr)\*e^{-\frac{\tilde{s}^2}{4t}}\Bigr]\\
=-i\*\sqrt{\pi/t}\,\*a\,\*e^{-\frac{(z_0-z)^2}{4t}}
\end{multline*}
for $t\in S(\theta_1,\pi-\varepsilon,r)\cap S(\theta_2,\pi-\varepsilon,r)$ and $z\in D_{\tilde{r}}$.

If we put $\theta_1=\delta^-$
and $\theta_2=\delta^+$, then $(t,z)\in S(\delta,\pi-\varepsilon,r)\times D_{\tilde{r}}$ for small $\varepsilon>0$.
We get 
\begin{gather*}
u^{\delta^+}(t,z)-u^{\delta^-}(t,z)=-i\*\sqrt{\pi/t}\,\*a\,\*e^{-\frac{(z_0-z)^2}{4t}}\quad
\textrm{for}\quad (t,z)\in S(\delta,\pi-\varepsilon,r)\times D_{\tilde{r}}.
\end{gather*}

Notice that for such $(t,z)$ we have $-i\*\sqrt{\pi/t}\,\*a\*\,e^{-\frac{(z_0-z)^2}{4t}}\sim_1 0$,
therefore  $u^{\delta^+}(t,z)\sim_1\hat{u}(t,z)$ and $u^{\delta^-}(t,z)\sim_1\hat{u}(t,z)$.
\bigskip\par
Case 3. Assume that $\theta_1<\delta+2\pi<\theta_2.$ Then $B(-z_0,\tilde r)\subset G$ for some $\tilde{r}>0$.

Analogously to Case 2, for $z\in D_{\tilde{r}}$ we get
$B(G,\varphi,z)=\{z-z_0\}$ and by (\ref{eq:difference}) we derive
\begin{multline*}
u^{\theta_2}(t,z)-u^{\theta_1}(t,z)
=-2\pi\*i\,\*\mathrm{res}_{\tilde{s}=z-z_0}\,\Bigl[\frac{1}{\sqrt{4\pi\*t}}\bigl(\varphi(z+\tilde{s})+\varphi(z-\tilde{s})\bigr)
\*e^{-{\tilde{s}}^2/4t}\Bigr]\\
=-i\*\sqrt{\pi/t}\lim_{\tilde{s}\rightarrow z-z_0}(\tilde{s}-(z-z_0))\Bigl[ \bigl(\frac{a}{\tilde{s}-(z_0-z)}+ \tilde{\varphi}(z+\tilde{s})+
\frac{-a}{\tilde{s}-(z-z_0)}+\tilde{\varphi}(z-\tilde{s})\bigr)\*e^{-\frac{\tilde{s}^2}{4t}}\Bigr]\\
=i\*\sqrt{\pi/t}\,\*a\,\*e^{-\frac{(z_0-z)^2}{4t}}
\end{multline*}
for $t\in S(\theta_1,\pi-\varepsilon,r)\cap S(\theta_2,\pi-\varepsilon,r)$ and $z\in D_{\tilde{r}}$.

If we put $\theta_1={(\delta+2\pi)}^-$ and $\theta_2={(\delta+2\pi)}^+$, then $(t,z)\in S(\delta+2\pi,\pi-\varepsilon,r)\times
D_{\tilde{r}}$ for small $\varepsilon>0$.
We get $$u^{(\delta+2\pi)^+}(t,z)-u^{(\delta+2\pi)^-}(t,z)=i\*\sqrt{\pi/t}\,\*a\,\*e^{-\frac{(z_0-z)^2}{4t}}$$ for
$(t,z)\in S(\delta+2\pi,\pi-\varepsilon,r)\times D_{\tilde{r}}$.
\end{proof}

\begin{Uw}
 The Cauchy problem (\ref{eq:heat}) with a meromorphic Cauchy datum was previously also studied by Lutz, Miyake and Sch\"afke 
 \cite[Section 5]{L-M-S},
 but only in the specific case $\varphi(z)=\frac{1}{z}$, which is not included in our considerations. 
\end{Uw}

Since the equation (\ref{eq:heat}) is linear we get
\begin{Wn}
\label{co:merom}
Assume that the Cauchy datum of (\ref{eq:heat}) is given by
\begin{equation}
\label{eq:merom}
\varphi(z)=\sum_{l=1}^{n}\sum_{j=1}^{m_l}\frac{a_{lj}}{z-z_{lj}}+\tilde{\varphi}(z)
\end{equation}
for some $a_{lj}, z_{lj} \in\CC\setminus\{0\}$ and
$\tilde{\varphi}(z)\in\OO^{2}(\CC),$ where $j=1,\dots,m_l$, $l=1,\dots,n$ and $z_{l_j}$ are poles such
that $\delta_l:=2\*\arg (z_{l_1})=\dots=2\*\mathrm{arg}(z_{l_{m_l}})\in [0,2\pi)$ and $0\leq \delta_1<\delta_2<\dots<\delta_n< 2\pi$.
Set $\delta_{l+n}:=\delta_l+2\pi$ for $l=1,\dots,n$ and $u_k(t,z):= u^{\theta}(t,z)$ for $\theta\in(\delta_k,\delta_{k+1})\mod4\pi$
and $k=1,\dots,2n$ with $\delta_{2n+1}:=\delta_1+4\pi$,
where $u^{\theta}(t,z)$ is a solution of (\ref{eq:heat}) given by (\ref{eq:heat_solution}). Finally for $r>0$ denote $W_r=\{t\in W\colon 0<|t|<r\}$,
where $W$ is the Riemann surface of the square root function

Then for every $\varepsilon>0$ there exists $r>0$ such that $u_k(t,z)\in\OO(V_k(\varepsilon,r)\times D)$ (for $k=1,\dots,2n$),
where $V_k(\varepsilon,r):=\{t\in W\colon |t|\in(0,r), \arg t \in
(\delta_k-\frac{\pi}{2}+\frac{\varepsilon}{2},\delta_{k+1}+\frac{\pi}{2}-\frac{\varepsilon}{2}) \mod 4\pi\}$,
and 
$\{u_1,u_2,\dots,u_{2n}\}$ is a maximal family of solutions of (\ref{eq:heat}) on $W_r\times D$.
 Moreover, if $\hat{u}$ is a formal solution of (\ref{eq:heat}) then Stokes lines for $\hat{u}$ are given by the sets
 $\mathcal{L}_{\delta_k}$  and anti-Stokes lines by $\mathcal{L}_{\delta_k\pm\frac{\pi}{2}}$ for $k=1,\dots,2n$.
Jumps across the Stokes lines $\mathcal{L}_{\delta_l}$ and $\mathcal{L}_{\delta_{l+n}}$ ($l=1,\dots,n$) are equal respectively to
\begin{itemize}
 \item $u_l(t,z)-u_{l-1}(t,z)=u^{\delta_l^+}(t,z)-u^{\delta_l^-}(t,z)=-i\*\sqrt{\pi/t}\*\sum_{j=1}^{m_l}a_{lj}\,\*e^{-\frac{(z_{lj}-z)^2}{4t}}$
 for $(t,z)\in S(\delta_l,\pi-\varepsilon,r)\times D$ with the notation $u_0(t,z):=u_{2n}(t,z)$,
 \item $u_{l+n}(t,z)-u_{l-1+n}(t,z)=u^{\delta_{l+n}^+}(t,z)-u^{\delta_{l+n}^-}(t,z)
 =i\*\sqrt{\pi/t}\*\sum_{j=1}^{m_l}a_{lj}\,\*e^{-\frac{(z_{lj}-z)^2}{4t}}$ for $(t,z)\in S(\delta_{l+n},\pi-\varepsilon,r)\times D$.
\end{itemize}
\end{Wn}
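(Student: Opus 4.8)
The plan is to deduce the statement from Theorem \ref{th:main} and Theorem \ref{th:general} by using the linearity of the heat equation (\ref{eq:heat}); the only genuinely new ingredient is the bookkeeping of several poles grouped by argument.

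First I would note that the datum $\varphi$ in (\ref{eq:merom}) is holomorphic on $\CC$ off the finitely many poles $z_{lj}$, each summand $\frac{a_{lj}}{z-z_{lj}}$ being bounded near infinity and $\tilde{\varphi}\in\OO^2(\CC)$; hence $\varphi$ fulfils the hypothesis (\ref{eq:heat_cond}) of Theorem \ref{th:heat} for a direction $d$ exactly when the two rays $\arg z=\frac{d}{2}$ and $\arg z=\frac{d}{2}+\pi$ avoid all the $z_{lj}$, that is, when $d\neq\delta_l\pmod{2\pi}$ for every $l$. So $\hat{u}$ is $1$-summable in every such direction, with $1$-sum given by (\ref{eq:heat_solution}), and, just as in the proof of Theorem \ref{th:main}, $q=2$ is the smallest positive rational with $u^{\theta}(t,z)=u^{\theta}(te^{2q\pi i},z)$, so the natural base is the Riemann surface $W$ of $t^{1/2}$.

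Next I would compute the jumps and thereby pin down the singular directions. The identity (\ref{eq:difference}) was derived in the proof of Theorem \ref{th:main} for an arbitrary datum in $\OO^2$ with isolated poles, so it is available here verbatim; for $0<\theta_2-\theta_1<\pi$ the sector $G=G(\frac{\theta_1}{2},\frac{\theta_2}{2})$ embeds into $\CC$, and for small $z\in D$ the poles of $\tilde{s}\mapsto\varphi(z+\tilde{s})+\varphi(z-\tilde{s})$ in $G$ can only be the points $z_{lj}-z$ (argument near $\frac{\delta_l}{2}$) or $z-z_{lj}$ (argument near $\frac{\delta_l}{2}+\pi$). Fixing $l$ and choosing a narrow $G$ around $\frac{\delta_l}{2}$, only the points $z_{lj}-z$ with $2\arg z_{lj}=\delta_l$ remain — the arguments attached to the other poles differ from $\frac{\delta_l}{2}$ modulo $2\pi$ because $0\le\delta_1<\dots<\delta_n<2\pi$ — and summing their residues exactly as in Case 2 of Theorem \ref{th:main} gives $u^{\delta_l^+}-u^{\delta_l^-}=-i\sqrt{\pi/t}\sum_{j=1}^{m_l}a_{lj}e^{-(z_{lj}-z)^2/(4t)}$. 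Centring $G$ at $\frac{\delta_l}{2}+\pi$ instead selects the poles $z-z_{lj}$ and, as in Case 3, yields the sign-reversed formula, which is the jump across $\mathcal{L}_{\delta_{l+n}}$; directions $\theta$ lying farther than $\pi$ apart are reached by inserting auxiliary intermediate directions, exactly as in Theorem \ref{th:main}.

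Then I would observe that these jumps are not identically zero: for fixed $l$ the poles $z_{lj}$ are pairwise distinct with a common argument, hence of pairwise distinct moduli, so the functions $z\mapsto e^{-(z_{lj}-z)^2/(4t)}$ are linearly independent and, as $a_{lj}\neq0$, the sums above do not vanish. Thus $\delta_1,\dots,\delta_n$ and $\delta_1+2\pi,\dots,\delta_n+2\pi$ are all genuine singular directions, while the first step already gives $1$-summability in all remaining directions, so the singular directions modulo $4\pi$ are precisely $\delta_1<\dots<\delta_{2n}$. At this point Theorem \ref{th:general}, applied with $k=1$, $q=2$ and $W$ the Riemann surface of $t^{1/2}$, yields the whole assertion: the Stokes lines $\mathcal{L}_{\delta_k}$, the anti-Stokes lines $\mathcal{L}_{\delta_k\pm\frac{\pi}{2}}$, the sectors $V_k(\varepsilon,r)$ on which $u_k=\Ss_{1,\theta}\hat{u}$ (for $\theta\in(\delta_k,\delta_{k+1})$) is defined, and the maximality of $\{u_1,\dots,u_{2n}\}$ on $W_r\times D$; the explicit jump formulas are those computed in the previous paragraph. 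The step I expect to be the main obstacle is precisely this bookkeeping: making sure that on the Stokes direction $\delta_l$ no residue coming from another group $l'\neq l$, nor from the shifted copy sitting at $\delta_l+2\pi$, enters the narrow sector $G$. This is where the normalization $\arg z_{lj}\in[0,\pi)$ (in the spirit of Remark \ref{re:minus}) and the separation of the $\delta_l$ inside $[0,2\pi)$ are used; everything else is linearity of (\ref{eq:heat}) together with a verbatim rerun of the residue computations in the proof of Theorem \ref{th:main}.
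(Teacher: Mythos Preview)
Your proposal is correct and follows essentially the same route as the paper: invoke (\ref{eq:difference}) from the proof of Theorem \ref{th:main}, identify $B(G,\varphi,z)$ as $\{z_{l1}-z,\dots,z_{lm_l}-z\}$ (resp.\ $\{z-z_{l1},\dots,z-z_{lm_l}\}$) for a narrow sector $G$ straddling $\frac{\delta_l}{2}$ (resp.\ $\frac{\delta_l}{2}+\pi$), sum the residues, and read off the maximal family from Theorem \ref{th:general}. You are in fact more explicit than the paper on two points---checking that the jumps do not vanish identically (so that the $\delta_k$ are genuine singular directions) and spelling out the application of Theorem \ref{th:general}---both of which the paper leaves implicit.
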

\begin{proof} It is sufficient to calculate jumps across the Stokes lines $\mathcal{L}_{\delta_l}$ and $\mathcal{L}_{\delta_{l+n}}$.
To this end fix $l\in\{1,\dots,n\}$. For every $\varepsilon>0$ there exist $r,\tilde{r}>0$ such that the difference
$u^{\delta_l^+}(t,z)-u^{\delta_l^-}(t,z)$ satisfies (\ref{eq:difference}) for $(t,z)\in S(\delta_l,\pi-\varepsilon,r)\times D_{\tilde{r}}$ with
$B(G,\varphi,z)=\{z_{l_1}-z,\dots,z_{l_{m_l}}-z\}$. Hence
\begin{multline*}
u^{\delta_l^+}(t,z)-u^{\delta_l^-}(t,z)=-2\pi i\sum_{j=1}^{m_l}\mathrm{res}_{\tilde{s}=z_{l_j}-z}
 \Bigl[\frac{1}{\sqrt{4\pi\*t}}\bigl(\varphi(z+\tilde{s})+\varphi(z-\tilde{s})\bigr)\*e^{-{\tilde{s}}^2/4t}\Bigr]\\
=-i\*\sqrt{\pi/t}\*\sum_{j=1}^{m_l}a_{lj}\,\*e^{-\frac{(z_{lj}-z)^2}{4t}}.
\end{multline*}

To calculate the difference $u^{\delta_{l+n}^+}(t,z)-u^{\delta_{l+n}^-}(t,z)$, we repeat our previous considerations with
$B(G,\varphi,z)=\{z-z_{l_1},\dots,z-z_{l_{m_l}}\}$ for $(t,z)\in S(\delta_{l+n},\pi-\varepsilon,r)\times D_{\tilde{r}}$.
\end{proof}

\begin{Uw}
 As in Remark \ref{re:minus}, we can reduce the general Cauchy datum (\ref{eq:merom}) with $\arg z_{lj}\in[0,2\pi)$ to the case
 $\arg z_{lj}\in[0,\pi)$ assumed in Corollary \ref{co:merom}, by the eventual replacement $a_{lj}$ by $-a_{lj}$.
\end{Uw}

\begin{Wn}
\label{co:merom2}
Assume that the Cauchy datum of (\ref{eq:heat}) is given by
$$\varphi(z)=\frac{a_{-1}}{z-z_0}+\dots+\frac{a_{1-n}}{(z-z_0)^{n-1}}+\frac{a_{-n}}{(z-z_0)^n}+\tilde{\varphi}(z)$$
for some $ n\in\NN$,
$a_{-1},\dots,a_{1-n}\in\CC$, $a_{-n}, z_0\in\CC\setminus\{0\}$ and $\tilde{\varphi}(z)\in\OO^2(\CC)$.
In this case we obtain the same assertion as in Theorem \ref{th:main} but jumps across the Stokes lines $\mathcal{L}_{\delta}$
and $\mathcal{L}_{\delta+2\pi}$ are given respectively by
\begin{itemize}
 \item $u_1(t,z)-u_2(t,z)=u^{\delta^+}(t,z)-u^{\delta^-}(t,z)$ 
 $$={-i\sqrt{\pi/t}}\*\,\,\sum_{k=0}^{n-1}\Bigl[\frac{a_{k-n}}{(n-k-1)!}\*\lim_{s\longrightarrow z_0-z}\frac{d^{n-k-1}}{ds^{n-k-1}}
 \bigl(e^{\frac{-s^2}{4t}}\bigr)\Bigr] $$ for $(t,z)\in S(\delta,\pi-\varepsilon,r)\times D$,
\item $u_2(t,z)-u_1(t,z)=u^{(\delta + 2\pi)^+}(t,z)-u^{(\delta +2\pi)^-}(t,z)$
$$={-i\sqrt{\pi/t}}\*\,\,\sum_{k=0}^{n-1}\Bigl[\frac{(-1)^{k-n}a_{k-n}}{(n-k-1)!}\*\lim_{\tilde s\longrightarrow z-z_0}\frac{d^{n-k-1}}{d\tilde s^{n-k-1}}\bigl(e^{\frac{-\tilde s^2}{4t}}\bigr)
\Bigr], $$ for $(t,z)\in S(\delta+2\pi,\pi-\varepsilon,r)\times D.$
\end{itemize}
\end{Wn}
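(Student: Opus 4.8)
The plan is to mimic the proof of Theorem~\ref{th:main} line by line, the only new ingredient being the evaluation of a residue at a pole of order $n$ instead of a simple pole. First I would check that the Cauchy datum still satisfies the growth hypothesis (\ref{eq:heat_cond}) for every $d\neq\delta\mod 2\pi$: indeed $\tilde\varphi\in\OO^2(\CC)$ and the principal part $\sum_{k=0}^{n-1}a_{k-n}(z-z_0)^{k-n}$ is holomorphic and bounded on every closed sector avoiding the ray $\{\arg z=\arg z_0\}$, so $\varphi\in\OO^2\bigl(D\cup S(\tfrac d2,\tfrac\varepsilon2)\cup S(\tfrac d2+\pi,\tfrac\varepsilon2)\bigr)$ for small $\varepsilon>0$. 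Hence Theorem~\ref{th:heat} applies, $q=2$ is again the smallest period, the singular directions modulo $4\pi$ are still contained in $\{\delta,\delta+2\pi\}$ (they depend only on the position $z_0$ of the pole, not on its order), and Theorem~\ref{th:general} produces verbatim the same maximal family $\{u_1,u_2\}$ on $W_r\times D$, the same Stokes lines $\mathcal L_\delta,\mathcal L_{\delta+2\pi}$ and the same anti-Stokes lines $\mathcal L_{\delta\pm\pi/2},\mathcal L_{\delta+2\pi\pm\pi/2}$. It therefore suffices to recompute the two jumps (and to note that they do not vanish identically, so that $\mathcal L_\delta$ and $\mathcal L_{\delta+2\pi}$ are genuine Stokes lines; this is immediate, since the sums below equal $e^{-(z_0-z)^2/4t}$ times a polynomial in $z_0-z$ of exact degree $n-1$ whose leading coefficient is a nonzero multiple of $a_{-n}$).

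For the jump across $\mathcal L_\delta$ I would apply formula (\ref{eq:difference}) with $\theta_1=\delta^-$ and $\theta_2=\delta^+$. Exactly as in Case~2 of the proof of Theorem~\ref{th:main}, there is $\tilde r>0$ with $B(G,\varphi,z)=\{z_0-z\}$ for $z\in D_{\tilde r}$, hence
$$u^{\delta^+}(t,z)-u^{\delta^-}(t,z)=-2\pi i\,\mathrm{res}_{\tilde s=z_0-z}\Bigl[\tfrac{1}{\sqrt{4\pi t}}\bigl(\varphi(z+\tilde s)+\varphi(z-\tilde s)\bigr)e^{-\tilde s^2/4t}\Bigr].$$
Near $\tilde s=z_0-z$ both $\varphi(z-\tilde s)$ and the entire part $\tilde\varphi(z+\tilde s)$ are holomorphic and hence contribute nothing, while $\varphi(z+\tilde s)=\sum_{k=0}^{n-1}a_{k-n}\bigl(\tilde s-(z_0-z)\bigr)^{k-n}+(\text{holomorphic})$. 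Using the elementary identity $\mathrm{res}_{\tilde s=s_0}\bigl[g(\tilde s)(\tilde s-s_0)^{-m}\bigr]=\tfrac{1}{(m-1)!}\,g^{(m-1)}(s_0)$ with $m=n-k$ and $g(\tilde s)=\tfrac{1}{\sqrt{4\pi t}}\,a_{k-n}\,e^{-\tilde s^2/4t}$, together with $\tfrac{-2\pi i}{\sqrt{4\pi t}}=-i\sqrt{\pi/t}$, yields the first claimed formula.

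The jump across $\mathcal L_{\delta+2\pi}$ is obtained in the same way, now following Case~3: one has $B(G,\varphi,z)=\{z-z_0\}$, and near $\tilde s=z-z_0$ it is the factor $\varphi(z-\tilde s)$ that is singular, with $\varphi(z-\tilde s)=\sum_{k=0}^{n-1}a_{k-n}\bigl(-(\tilde s-(z-z_0))\bigr)^{k-n}+(\text{holomorphic})=\sum_{k=0}^{n-1}(-1)^{k-n}a_{k-n}\bigl(\tilde s-(z-z_0)\bigr)^{k-n}+(\text{holomorphic})$; the extra sign $(-1)^{k-n}$ is precisely the one appearing in the statement. I expect no analytic obstacle: every nontrivial ingredient (summability in the nonsingular directions, the integral representation (\ref{eq:heat_solution}), Watson-type uniqueness, and the contour-deformation/residue identity (\ref{eq:difference})) is already supplied by Theorem~\ref{th:heat} and by the proof of Theorem~\ref{th:main}. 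The only thing requiring care is the bookkeeping of the two sign conventions and of the index shift $m=n-k$ in the higher-order residue formula.
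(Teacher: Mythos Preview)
Your proposal is correct and follows essentially the same route as the paper: invoke (\ref{eq:difference}) from the proof of Theorem~\ref{th:main} and compute the residue at $\tilde s=z_0-z$ (respectively $\tilde s=z-z_0$). The only cosmetic difference is that the paper applies the single residue formula for a pole of order $n$ (take the $(n-1)$-st derivative of $(\tilde s-(z_0-z))^n$ times the integrand, then expand), whereas you split the principal part term by term and use $\mathrm{res}_{\tilde s=s_0}\bigl[g(\tilde s)(\tilde s-s_0)^{-m}\bigr]=\tfrac{1}{(m-1)!}g^{(m-1)}(s_0)$ for each $m=n-k$; both lead directly to the stated sum.
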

\begin{proof}
As in the proof of Theorem \ref{th:main} we have
\begin{multline*}
u_1(t,z)-u_2(t,z)=u^{\delta^+}(t,z)-u^{\delta^-}(t,z)\\
=-2\pi i\,\*\mathrm{res}_{\tilde{s}=z_0-z}\,\Bigl[\frac{1}{\sqrt{4\pi\*t}}
\bigl(\varphi(z+\tilde{s})+\varphi(z-\tilde{s})\bigr)\*e^{-{\tilde{s}}^2/4t}\Bigr]\\ 
=\frac{-i\sqrt{\pi/t}}{(n-1)!}\lim_{\tilde s\longrightarrow z_0-z}\frac{d^{n-1}}{d\tilde s^{n-1}}
\Bigl[\Bigl(\frac{a_{-1}}{(\tilde s-(z_0-z))^{1-n}}+\dots+\frac{a_{1-n}}{(\tilde s-(z_0-z))^{-1}}\\
+\,a_{-n}+(\tilde s-(z_0-z))^n\*\,\tilde{\varphi}(z+\tilde s)\Bigr)\*e^{-\tilde s^2/4t}\Bigr]\\
={-i\sqrt{\pi/t}}\*\,\,\sum_{k=0}^{n-1}\Bigl[\frac{a_{k-n}}{(n-k-1)!}\*\lim_{\tilde s\longrightarrow z_0-z}\frac{d^{n-k-1}}{d\tilde s^{n-k-1}}
\bigl(e^{\frac{-\tilde s^2}{4t}}\bigr)\Bigr]
\end{multline*}
for $(t,z)\in S(\delta,\pi-\varepsilon,r)\times D$.

Analogously
\begin{multline*}
u_2(t,z)-u_1(t,z)=u^{(\delta + 2\pi)^+}(t,z)-u^{(\delta +2\pi)^-}(t,z)\\
=-2\pi i\,\*\mathrm{res}_{\tilde{s}=z-z_0}\,\Bigl[\frac{1}{\sqrt{4\pi\*t}}
\bigl(\varphi(z+\tilde{s})+\varphi(z-\tilde{s})\bigr)\*e^{-{\tilde{s}}^2/4t}\Bigr]\\ 
=\frac{-i\sqrt{\pi/t}}{(n-1)!}\lim_{\tilde s\longrightarrow z-z_0}\frac{d^{n-1}}{d\tilde s^{n-1}}
\Bigl[\Bigl(\frac{-a_{-1}}{(\tilde s-(z-z_0))^{1-n}}+\dots+\frac{(-1)^{1-n} a_{1-n}}{(\tilde s-(z-z_0))^{-1}}\\
+\,(-1)^{-n}a_{-n}+(\tilde s-(z-z_0))^n\*\,\tilde{\varphi}(z-\tilde s)\Bigr)\*e^{-\tilde s^2/4t}\Bigr]\\
={-i\sqrt{\pi/t}}\*\,\,\sum_{k=0}^{n-1}\Bigl[\frac{(-1)^{k-n}a_{k-n}}{(n-k-1)!}\*\lim_{\tilde s\longrightarrow z-z_0}
\frac{d^{n-k-1}}{d\tilde s^{n-k-1}}\bigl(e^{\frac{-\tilde s^2}{4t}}\bigr)\Bigr] 
\end{multline*}
for $(t,z)\in S(\delta+2\pi,\pi-\varepsilon,r)\times D$.
\end{proof}

\begin{Uw}
Analogously to Corollaries \ref{co:merom} and \ref{co:merom2} we can also derive the Stokes lines, anti-Stokes lines and jumps in the general case,
when the Cauchy datum $\varphi$ of (\ref{eq:heat}) is given by a meromorphic function with finite number of poles, i.e.
\begin{gather}
\label{eq:general_cauchy}
\varphi(z)=\sum_{l=1}^{n}\sum_{k=1}^{r_l}\frac{a_{lk}}{(z-z_l)^k}+\tilde{\varphi}(z)\quad\textrm{with}\quad z_l\in\CC\setminus\{0\},\quad \tilde{\varphi}(z)\in\OO^2(\CC).
\end{gather}
\end{Uw}

\section {Generalizations of the heat equation}
In this section we will generalize the results presented in the previous part. 

Let us consider the equation  
\begin{equation}
 \label{eq:generalization}
 \begin{cases}
\partial_t^p u(t,z)=\partial_z^q u(t,z),\,\, p,q\in\NN \\
u(0,z)=\varphi(z) \in\OO(D)\\
\partial_{t}^{j}u(0,z)=0\ \textrm{for}\,\, j=1,2,\dots,p-1.
 \end{cases}
\end{equation}

The equation above has a unique formal solution represented by $$\hat{u}(t,z)=\sum_{n=0}^{\infty}\frac{\varphi^{(qn)}(z)\* t^{pn}}{(pn)!}.$$

The result for the summability of $\hat{u}(t,z)$ was proved by M.~Miyake \cite{Miy}.
An integral representation of the $\frac{p}{q-p}$-sum $u(t,z)$, in terms of the Barnes generalized hypergeometric series,
was obtained by K.~Ichinobe \cite[Theorem 5.1]{I2}.

Now we present another form of $\frac{p}{q-p}$-sum which appears to be more useful to study the Stokes phenomenon.

\begin{Tw}
\label{th:generalization}
Let $d\in\RR$. Suppose that $\hat{u}(t,z)$ is a unique formal solution of the Cauchy problem (\ref{eq:generalization}) with $1\leq p<q$ and
\begin{gather}
\label{eq:generalization_cond}
\varphi(z)\in\OO^{\frac{q}{q-p}}\Bigl(D\cup \bigcup_{l=0}^{q-1} S(\frac{dp}{q}+\frac{2\pi\*l}{q},\frac{\varepsilon p}{q})\Bigr)\quad
\textit{for some}\quad \varepsilon>0.
\end{gather}

Then $\hat{u}(t,z)$ is $\frac{p}{q-p}$-summable  in the direction $d$ and for every $\theta\in(d-\frac{{\varepsilon}}{2},d+\frac{{\varepsilon}}{2})$
and
for every $\tilde{\varepsilon}\in(0,\varepsilon)$ there exists $r>0$ such that its $\frac{p}{q-p}$-sum
$u^{\theta}\in\OO(S(\theta,\frac{\pi(q-p)}{p}-\tilde{\varepsilon},r)\times D)$ is given by
\begin{multline}
 \label{eq:generalization_solution}
u(t,z)=u^{\theta}(t,z)=E_{\frac{p}{q-p},\theta}\breve{B}_{\frac{p}{q-p}}\hat{u}(t,z)\\
=\frac{1}{q\*\sqrt[q]{t^p}}\*\int_{e^{\frac{i\theta p}{q}}\RR_+}\bigl(\varphi(z+\tilde{s})+\dots+\varphi(z+e^{\frac{2(q-1)\pi\*i}{q}}\*\tilde{s})\bigr)\,C_{\frac{q}{p}}(\tilde{s}/\sqrt[q]{t^p})\,d\tilde{s}
\end{multline}
for $t\in S(\theta,\frac{\pi(q-p)}{p}-\tilde{\varepsilon},r)$ and $z\in D$.

Moreover, for every $\bar{\varepsilon}\in(0,\varepsilon)$ there exists
$r>0$ such that $u\in\OO(S(d,\frac{\pi(q-p)}{p}+\bar{\varepsilon},r)\times D)$.
\end{Tw}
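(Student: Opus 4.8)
The plan is to verify the two conditions (A) and (B) of the modified $k$-summability method with $k=\frac{p}{q-p}$, so that $\frac1k=\frac{q-p}{p}$, $1+\frac1k=\frac qp$, $\frac{k}{1+k}=\frac pq$ and $\frac{k+1}{k}=\frac qp$, and then to identify the resulting $k$-sum with the integral \eqref{eq:generalization_solution}. Recall that \eqref{eq:generalization} forces the coefficients of $\hat u(t,z)=\sum_m a_m(z)t^m$ to satisfy $a_{pn}(z)=\frac{\varphi^{(qn)}(z)}{(pn)!}$ and $a_m(z)\equiv 0$ whenever $p\nmid m$. First I would check (A): shrinking $D$ slightly and applying the Cauchy estimates to $\varphi$ gives $|\varphi^{(qn)}(z)|\le C\rho^{-qn}(qn)!$ uniformly on a subdisc, and Stirling's formula yields $\frac{(qn)!}{(pn)!}\le AB^{pn}\bigl((pn)!\bigr)^{(q-p)/p}$; hence $\hat u\in\OO(D)[[t]]_{(q-p)/p}=\OO(D)[[t]]_{1/k}$.

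For (B) I would compute the modified Borel transform of order $k$. Since only the powers $t^{pn}$ occur and $\Gamma\bigl(1+pn(1+\tfrac1k)\bigr)=\Gamma(1+qn)=(qn)!$, one gets
$$(\breve B_k\hat u)(t,z)=\sum_{n=0}^{\infty}\frac{\varphi^{(qn)}(z)\,t^{pn}}{(qn)!}.$$
The crucial point is the roots-of-unity multisection of the Taylor expansion of $\varphi$: for $w$ in a small disc and $\omega:=e^{2\pi i/q}$ one has $\frac1q\sum_{l=0}^{q-1}\omega^{lm}=1$ if $q\mid m$ and $0$ otherwise, so
$$\frac1q\sum_{l=0}^{q-1}\varphi(z+\omega^l w)=\sum_{n=0}^{\infty}\frac{\varphi^{(qn)}(z)\,w^{qn}}{(qn)!}.$$
Taking $w=\sqrt[q]{t^p}$ identifies $(\breve B_k\hat u)(t,z)$ near $t=0$ with $\frac1q\sum_{l=0}^{q-1}\varphi\bigl(z+\omega^l\sqrt[q]{t^p}\bigr)$. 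Since the right-hand side is holomorphic wherever $\varphi$ is analytically continued, and since the directions $\arg\bigl(\omega^l\sqrt[q]{t^p}\bigr)=\frac{dp}{q}+\frac{2\pi l}{q}$ swept out as $\arg t$ ranges over a neighbourhood of $d$ are exactly those appearing in \eqref{eq:generalization_cond}, it follows that $(\breve B_k\hat u)(\cdot,z)$ continues analytically to $S_d=S(d,\varepsilon)$, and the $\OO^{q/(q-p)}$ growth of $\varphi$ on those sectors gives $|(\breve B_k\hat u)(t,z)|\le C_1 e^{C_2|t|^{p/(q-p)}}$, i.e. exponential growth of order $\frac{p}{q-p}=k$. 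This is (B), so $\hat u$ is $k$-summable in the direction $d$.

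It then remains to unwind the Ecalle operator \eqref{eq:ecalle}. By definition $u^\theta=\Ss_{k,\theta}\hat u=E_{k,\theta}\breve B_k\hat u$; substituting $\tilde s=s^{p/q}$ in \eqref{eq:ecalle} — whence the ray $e^{i\theta}\RR_+$ becomes $e^{i\theta p/q}\RR_+$, $(s/t)^{k/(1+k)}=\tilde s/\sqrt[q]{t^p}$, and $t^{-k/(1+k)}=1/\sqrt[q]{t^p}$ — and inserting the closed form of $\breve B_k\hat u$ obtained above turns the integral directly into \eqref{eq:generalization_solution}. Finally, the domains of holomorphy $S(\theta,\frac{\pi(q-p)}{p}-\tilde\varepsilon,r)$ for each fixed $\theta\in(d-\frac\varepsilon2,d+\frac\varepsilon2)$ and the wider sector $S(d,\frac{\pi(q-p)}{p}+\bar\varepsilon,r)$ follow from Remark \ref{re:rem 3} together with Lemma \ref{le:kk}, since $\frac\pi k=\frac{\pi(q-p)}{p}$.

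The step I expect to be the main obstacle is the passage from the \emph{local} power-series identity for the multisection to the \emph{global} analytic continuation of $\breve B_k\hat u$ over the full sector $S_d$: one must argue carefully that the closed-form expression $\frac1q\sum_{l=0}^{q-1}\varphi(z+\omega^l\sqrt[q]{t^p})$ genuinely is the analytic continuation of the Borel transform, and — hand in hand with this — perform the bookkeeping of arguments so that the union of sectors in \eqref{eq:generalization_cond} matches precisely the set of directions along which $\varphi$ must be continued. The growth estimate yielding (B) and the change of variables in the Ecalle integral are then routine.
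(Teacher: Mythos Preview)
Your proposal is correct and follows essentially the same route as the paper: compute the modified Borel transform, identify it via the roots-of-unity multisection with $\frac1q\sum_{l=0}^{q-1}\varphi(z+\omega^l\sqrt[q]{t^p})$, read off analytic continuation and exponential growth from the hypothesis on $\varphi$, then perform the substitution $\tilde s=s^{p/q}$ in the Ecalle integral and invoke Remark~\ref{re:rem 3}. The only difference is that you verify condition~(A) separately via Cauchy estimates and Stirling, whereas the paper lets the closed-form identification of the Borel transform do that work implicitly; and the continuation step you flag as the main obstacle is treated in the paper just as briskly as you treat it.
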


\begin{proof}
Using the procedure introduced in Section \ref{sect}, firstly we have
\begin{multline*}
v(t,z)=(\breve{B}_{\frac{p}{q-p}}\hat{u})(t,z)=\sum_{n=0}^{\infty}\frac{\varphi^{(qn)}(z)\*(t^{\frac{p}{q}})^{qn}}{(qn)!}\\
=\frac{1}{q}\*\bigl(\varphi(z+\sqrt[q]{t^p})+\varphi(z+e^{\frac{2\pi\*i}{q}}\*\sqrt[q]{t^{p}})+\dots+\varphi(z+e^{\frac{2(q-1)\pi\*i}{q}}\*\sqrt[q]{t^p})\bigr).
\end{multline*}
Since $\varphi(z)\in\OO^{\frac{q}{q-p}}\Bigl(D\cup\,\, \bigcup_{l=0}^{q-1} S(\frac{dp}{q}+\frac{2\pi\*l}{q},\frac{\varepsilon p}{q})\Bigr)$ for some
$\varepsilon>0$, we get $v(t,z)\in\OO^{\frac{p}{q-p}}\bigl((S(d,\varepsilon)\cup D)\times D\bigr).$

We obtain
\begin{multline*}
u^{\theta}(t,z)=(E_{\frac{p}{q-p},\theta}\,v)(t,z)=
\frac{1}{\sqrt[q]{t^p}}\*\int_{e^{i\*\theta}\RR_+}v(s,z)\,C_{\frac{q}{p}}((s/t)^{\frac{p}{q}})\,ds^{\frac{p}{q}}\\
=\frac{1}{\sqrt[q]{t^p}}\*\int_{e^{i\*\theta}\RR_+}\frac{1}{q}\*\bigl(\varphi(z+\sqrt[q]{s^p})+\dots+\varphi(z+e^{\frac{2(q-1)\pi\*i}{q}}\*\sqrt[q]{s^p})\bigr)
C_{\frac{q}{p}}((s/t)^{\frac{p}{q}})ds^{\frac{p}{q}}\\
\stackrel{\sqrt[q]{s^p}= \tilde s}{=} \frac{1}{q\*\sqrt[q]{t^p}}\*
\int_{e^{\frac{i\theta p}{q}}\RR_+}\bigl(\varphi(z+\tilde{s})+\dots+\varphi(z+e^{\frac{2(q-1)\pi\*i}{q}}\*\tilde{s})\bigr)C_{\frac{q}{p}}(\tilde{s}/\sqrt[q]{t^p})d\tilde{s},
\end{multline*}
for $\arg t\in(-\frac{\pi\*(q-p)}{2p}+\tilde{\varepsilon}+\theta, \frac{\pi\*(q-p)}{2p}-\tilde{\varepsilon}+\theta)$.

Taking $k=\frac{p}{q-p}$ in Remark \ref{re:rem 3} we conclude that for every $\bar{\varepsilon}\in(0,\varepsilon)$ there exists
$r>0$ such that $u\in\OO(S(d,\frac{\pi(q-p)}{p}+\bar{\varepsilon},r)\times D)$.
\end{proof}

\begin{Tw}
Assume that the Cauchy datum of (\ref{eq:generalization}) is given by
\begin{gather}
\label{eq:heat_gen}
\varphi(z)=\frac{a}{z-z_0}+\tilde{\varphi}(z)\quad \text{for some} \quad a,z_0 \in\CC\setminus\{0\} \quad \text{and}\quad
\tilde{\varphi}(z)\in\OO^{\frac{q}{q-p}}(\CC). 
\end{gather}
Set $\delta_l:=\frac{q}{p}\arg z_0+\frac{2(l-1)\pi}{p}$ ($l=1,\dots,q+1$) and $u_l:= u^{\theta}$ for
$\theta\in(\delta_l,\delta_{l+1})\mod\frac{2q\pi}{p}$ and $l=1,\dots,q$, where 
$u^{\theta}$ is a solution of (\ref{eq:generalization}) given by 
(\ref{eq:generalization_solution}). Finally for $r>0$ denote
 $W_r=\{t\in W\colon 0<|t|<r\}$, where $W$ is the Riemann surface of the function $t\mapsto t^{\frac{p}{q}}$.

Then for every $\varepsilon>0$ there exists $r>0$ such that
$u_l\in\OO(V_l\times D)$ ($l=1,\dots,q$), where $V_l=S(\delta_l, \frac{2\pi}{p}+\frac{\pi(q-p)}{p}-\varepsilon,r)$,
and $\{u_1,u_2,\dots, u_q\}$ is a maximal family of solutions of (\ref{eq:generalization}) on $W_r\times D$.
 
Moreover, if $\hat{u}$ is a formal solution of (\ref{eq:generalization}) then Stokes lines for $\hat{u}$ are the sets
$\mathcal{L}_{\delta_l}$ and anti-Stokes lines for $\hat{u}$ are the sets
$\mathcal{L}_{\delta_l\pm\frac{\pi(q-p)}{2p}}$.
Jumps across the Stokes lines $\mathcal{L}_{\delta_l}$ ($l=1,\dots,q$) are given by
$$
u_{l}(t,z)-u_{l-1}(t,z)=u^{\delta_l^+}(t,z)-u^{\delta_l^-}(t,z)=
-\frac{2\pi\*i}{q\*\sqrt[q]{t^p}}\,\*e^{-\frac{2(l-1)\pi i}{q}}\*a\,\*C_{\frac{q}{p}}(\frac{(z_0-z)e^{-\frac{2(l-1)\pi i}{q}}}{\sqrt[q]{t^p}})
$$
for $(t,z)\in S(\delta_l,\frac{\pi(q-p)}{p}-\varepsilon,r)\times D$ with the notation $u_0(t,z)=u_q(t,z)$.
\end{Tw}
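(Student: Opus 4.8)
The plan is to run the argument of the proof of Theorem~\ref{th:main} essentially verbatim, with the heat kernel replaced by $C_{q/p}$, the two-term sum $\varphi(z+\tilde{s})+\varphi(z-\tilde{s})$ replaced by the $q$-term sum in (\ref{eq:generalization_solution}), the summability order $1$ replaced by $k=\frac{p}{q-p}$, and the Riemann surface of the square root replaced by the Riemann surface $W$ of $t\mapsto t^{p/q}$. First I would determine the singular directions of $\hat{u}$. By Theorem~\ref{th:generalization}, $\hat{u}$ is $\frac{p}{q-p}$-summable in a direction $d$ whenever $\varphi$ satisfies (\ref{eq:generalization_cond}); since $\varphi$ has its only pole at $z_0$, this fails precisely when $z_0$ lies on one of the rays $\arg z=\frac{dp}{q}+\frac{2\pi l}{q}$, $l=0,\dots,q-1$, i.e. when $d\equiv\frac{q}{p}\arg z_0+\frac{2(l-1)\pi}{p}=\delta_l\pmod{\frac{2q\pi}{p}}$ for some $l\in\{1,\dots,q\}$. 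Thus the singular directions modulo $\frac{2q\pi}{p}$ are exactly $\delta_1,\dots,\delta_q$, and for every other direction $\theta$ the $\frac{p}{q-p}$-sum $u^{\theta}$ is given by (\ref{eq:generalization_solution}).

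Next I would invoke Theorem~\ref{th:general}. From (\ref{eq:generalization_solution}) the sum $u^{d}$ depends on $t$ only through $\sqrt[q]{t^{p}}$, so $q/p$ is the smallest positive rational with $u^{d}(te^{2\pi i q/p},z)=u^{d}(t,z)$ and the relevant surface is $W$. Applying Theorem~\ref{th:general} with $k=\frac{p}{q-p}$ (hence $\frac{\pi}{2k}=\frac{\pi(q-p)}{2p}$) and this period gives at once that the Stokes lines of $\hat{u}$ are the sets $\mathcal{L}_{\delta_l}$, the anti-Stokes lines are the sets $\mathcal{L}_{\delta_l\pm\frac{\pi(q-p)}{2p}}$, that each $u_l$ (obtained by letting $\theta$ range over $(\delta_l,\delta_{l+1})$) extends holomorphically to a sector $V_l$ of opening $\frac{2\pi}{p}+\frac{\pi(q-p)}{p}-\varepsilon>\frac{\pi}{k}$, and that $\{u_1,\dots,u_q\}$ is a maximal family of solutions of (\ref{eq:generalization}) on $W_r\times D$. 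This disposes of everything except the jump formulas.

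To compute the jump across $\mathcal{L}_{\delta_l}$ I would fix $l$, take $\theta_1=\delta_l^{-}$ and $\theta_2=\delta_l^{+}$, and repeat Cases~2--3 of the proof of Theorem~\ref{th:main}: express $u^{\theta_2}-u^{\theta_1}$ as the integral of the integrand of (\ref{eq:generalization_solution}) over the boundary of the thin sector swept out between the rays $e^{i\theta_1 p/q}\RR_{+}$ and $e^{i\theta_2 p/q}\RR_{+}$, close the contour with an arc of radius $r$, and let $r\to\infty$. The arc contribution vanishes for $|t|$ small since $\tilde{\varphi}\in\OO^{q/(q-p)}(\CC)$ while, by Remark~\ref{re:rem 2} with $\beta=\frac{q}{q-p}$ (equivalently Lemma~\ref{le:kk} with $\alpha=q/p$), $|C_{q/p}(\tilde{s}/\sqrt[q]{t^{p}})|\le A_1 e^{-A_2|\tilde{s}/\sqrt[q]{t^{p}}|^{q/(q-p)}}$, so the product decays to $0$ along the arc exactly as the Gaussian did in the proof of Theorem~\ref{th:main}; the residue theorem then identifies $u^{\theta_2}-u^{\theta_1}$ with $-2\pi i$ times the enclosed residues. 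For $z$ in a small disc the only enclosed pole is $\tilde{s}=(z_0-z)e^{-2(l-1)\pi i/q}$, the pole of the summand $\varphi(z+e^{2(l-1)\pi i/q}\tilde{s})$, and the Laurent computation $\mathrm{res}_{\tilde{s}=(z_0-z)e^{-2(l-1)\pi i/q}}\bigl[\frac{1}{q\sqrt[q]{t^{p}}}\frac{a}{z+e^{2(l-1)\pi i/q}\tilde{s}-z_0}C_{q/p}(\tilde{s}/\sqrt[q]{t^{p}})\bigr]=\frac{a\,e^{-2(l-1)\pi i/q}}{q\sqrt[q]{t^{p}}}C_{q/p}\bigl((z_0-z)e^{-2(l-1)\pi i/q}/\sqrt[q]{t^{p}}\bigr)$ yields the asserted formula, with the convention $u_0:=u_q$.

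The main obstacle is the bookkeeping hidden in the last step: among the $q$ poles $(z_0-z)e^{-2m\pi i/q}$ of the integrand of (\ref{eq:generalization_solution}), and across the sheets of $W$, one has to verify that exactly one lies in the thin sector swept out as $\theta$ crosses $\delta_l$, and that it is the one producing the coefficient $e^{-2(l-1)\pi i/q}$. This is precisely where the definition $\delta_l=\frac{q}{p}\arg z_0+\frac{2(l-1)\pi}{p}$ and a suitable normalization of $\arg z_0$ (as in Remark~\ref{re:minus}) enter; once the arguments of the rays and of the poles are tracked carefully, uniqueness of the enclosed pole and the residue identity above are routine, and nothing else in the statement requires more than direct reference to Theorems~\ref{th:generalization} and \ref{th:general}.
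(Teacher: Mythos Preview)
Your proposal is correct and follows essentially the same approach as the paper: determine the singular directions via Theorem~\ref{th:generalization}, apply Theorem~\ref{th:general} with $k=\frac{p}{q-p}$ on the Riemann surface of $t^{p/q}$ to obtain the maximal family and the Stokes/anti-Stokes lines, and compute the jumps by the same contour-deformation and residue argument as in Theorem~\ref{th:main}, identifying the unique enclosed pole $(z_0-z)e^{-2(l-1)\pi i/q}$. The paper organizes the residue computation into a Case~1 (no enclosed pole, confirming $u_l$ is well defined) and a Case~2 (one enclosed pole, giving the jump), but this is only a presentational difference from what you describe.
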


\begin{proof}
Let $\hat{u}$ be a formal solution of (\ref{eq:generalization}) with the Cauchy datum $\varphi$ given by (\ref{eq:heat_gen}).
Observe that, if $d\neq\delta_l\mod \frac{2q\pi}{p}$ for $l=1,\dots,q$ then $\varphi$ satisfies assumption (\ref{eq:generalization_cond}).

Hence by Theorem \ref{th:generalization}, $\hat{u}$ is $\frac{p}{q-p}$-summable in a direction $\theta\in\RR$, $\theta\neq \delta_l\mod \frac{2q\pi}{p}$  for $l=1,\dots,q$
and for every $\varepsilon>0$ there exists $r>0$ such that its $\frac{p}{q-p}$-sum satisfies
$$u^\theta (t,z)=\frac{1}{q\*\sqrt[q]{t^p}}\*\int_{e^{\frac{i\theta p}{q}}\RR_+}\bigl(\varphi(z+\tilde{s})+\dots+\varphi(z+e^{\frac{2(q-1)\pi\*i}{q}}\*\tilde{s})\bigr)\,C_{\frac{q}{p}}(\tilde{s}/\sqrt[q]{t^p})\,d\tilde{s}$$
for $(t,z)\in S(\theta,\frac{\pi(q-p)}{p}-\varepsilon,r)\times D$.

Observe that $u^{\theta}(t,z)=u^{\theta}(te^{\frac{2q\pi i}{p}},z)$ and $q$ is the smallest positive rational number for which this equality holds.
Moreover, the set of singular directions of $\hat{u}(t,z)$ modulo $\frac{2q\pi}{p}$ is given by $\delta_l\mod \frac{2q\pi}{p}$ for $l=1,\dots,q$.
Hence by Theorem \ref{th:general},
$\{u_1,u_2\dots,u_q\}$ with $u_l\in\OO(V_l\times D)$ ($l=1,\dots,q$) is a maximal family of solutions of (\ref{eq:generalization}) . Moreover,
Stokes lines for $\hat{u}$ are the sets
$\mathcal{L}_{\delta_l}$ and anti-Stokes lines for $\hat{u}$ are sets
$\mathcal{L}_{\delta_l\pm\frac{\pi(q-p)}{2p}}$.

To calculate jumps across the Stokes lines, take $\theta_1,\theta_2 \neq \delta_l\mod \frac{2q\pi}{p} $ and
suppose $0<\theta_2-\theta_1<\frac{2\pi}{p}$.
Proceeding as in the proof of Theorem \ref{th:main}, we obtain
\begin{multline}
 \label{eq:differ}
 u^{\theta_2}(t,z)-u^{\theta_1}(t,z)\\
 =-2\pi\*i\*\sum_{\tilde z\in B_{p,q}(G,\varphi,z)}\mathrm{res}_{\tilde s=\tilde z}\,
 \Bigl[\frac{1}{q\*\sqrt[q]{t^p}}\bigl(\varphi(z+\tilde{s})+\dots+\varphi(z+e^{\frac{2(q-1)\pi\*i}{q}}\*\tilde{s})\bigr)\,C_{\frac{q}{p}}(\tilde{s}/\sqrt[q]{t^p})\Bigr]
\end{multline}
for $t\in S(\theta_1,\frac{\pi(q-p)}{p}-\varepsilon,r)\cap S(\theta_2,\frac{\pi(q-p)}{p}-\varepsilon,r)$ and $z\in D$,
 where $G=G(\frac{p\theta_1}{q},\frac{p\theta_2}{q})=\{z\in\tilde{\CC}\colon \arg z\in(\frac{p\theta_1}{q},\frac{p\theta_2}{q})\}$ and 
 $B_{p,q}(G,\varphi,z)$ is the set of all points in the sector $G$
at which the function
$$\tilde{s}\mapsto\frac{1}{q\*\sqrt[q]{t^p}}\bigl(\varphi(z+\tilde{s})+\dots+\varphi(z+e^{\frac{2(q-1)\pi\*i}{q}}\*\tilde{s})\bigr)\,C_{\frac{q}{p}}(\tilde{s}/\sqrt[q]{t^p})
$$
has poles.
\bigskip\par
Analogously to the proof of Theorem \ref{th:main} we consider the following cases:
\bigskip\par
Case 1: Let $\theta_1,\theta_2\in (\delta_{l-1},\delta_l)$ for some $l=1,\dots,q.$
Since $\theta_1,\theta_2\in (\delta_{l-1},\delta_l)$, there exists $ \tilde{r}>0$ such that
$ B(z_0e^{-\frac{2(j-1)\pi i}{q}},\tilde{r})\cap G=\emptyset$ for $j=1,\dots,q$. Then we see that for $z\in D_{\tilde{r}}$
the set $B_{p,q}(G,\varphi,z)$ is empty,
and hence by (\ref{eq:differ})
$$u^{\theta_2}(t,z)-u^{\theta_1}(t,z)=0.$$ 
\bigskip\par
Case 2: Let $\theta_1<\delta_l<\theta_2$ for some $l=1,\dots,q.$ Then there exists $\tilde r>0$ such that
$B(z_0e^{-\frac{2(l-1)\pi i}{q}},\tilde r)\subset G$. 
Hence we get $B_{p,q}(G,\varphi,z)=\{(z_0-z)e^{-\frac{2(l-1)\pi i}{q}}\}$ for $z\in D_{\tilde{r}}$. 
Consequently, by (\ref{eq:differ}), we obtain
\begin{multline*}
u^{\theta_2}(t,z)-u^{\theta_1}(t,z)\\
=
-2\pi\*i\,\*\mathrm{res}_{\tilde{s}=(z_0-z)e^{-\frac{2(l-1)\pi i}{q}}}\,\Bigl[\frac{1}{q\*\sqrt[q]{t^p}}\bigl(\varphi(z+\tilde{s})+\dots+
\varphi(z+e^{\frac{2(q-1)\pi\*i}{q}}\*\tilde{s})\bigr)\,C_{\frac{q}{p}}(\tilde{s}/\sqrt[q]{t^p})\Bigr]\\
=-\frac{2\pi i}{q\sqrt[q]{t^p}}\mathrm{res}_{\tilde{s}= (z_0-z)e^{-\frac{2(l-1)\pi i}{q}}}\Bigl[\bigl(\frac{a}{\tilde{s}-(z_0-z)}+\dots
+\frac{e^{-\frac{2(q-1)\pi\*i}{q}}a}{\tilde{s}-e^{-\frac{2(q-1)\pi\*i}{q}}(z_0-z)}\bigr)\,
C_{\frac{q}{p}}(\tilde{s}/\sqrt[q]{t^p})\Bigr]\\
=-\frac{2\pi\*i}{q\*\sqrt[q]{t^p}}\,\*e^{-\frac{2(l-1)\pi i}{q}}\*a\,\*C_{\frac{q}{p}}(\frac{(z_0-z)e^{-\frac{2(l-1)\pi i}{q}}}{\sqrt[q]{t^p}}),
\end{multline*}
for $t\in S(\theta_1,\frac{\pi(q-p)}{p}-\varepsilon,r)\cap S(\theta_2\frac{\pi(q-p)}{p}-\varepsilon,r)$ and $z\in D_{\tilde{r}}$.

If we put $\theta_1=\delta_l^-$
and $\theta_2=\delta_l^+$, then we have $(t,z)\in S(\delta_l,\frac{\pi(q-p)}{p}-\varepsilon,r)\times D_{\tilde{r}}$ for small $\varepsilon>0$.
So we get $$u^{\delta_l^+}(t,z)-u^{\delta_l^-}(t,z)=
-\frac{2\pi\*i}{q\*\sqrt[q]{t^p}}\,\*e^{-\frac{2(l-1)\pi i}{q}}\*a\,\*C_{\frac{q}{p}}(\frac{(z_0-z)e^{-\frac{2(l-1)\pi i}{q}}}{\sqrt[q]{t^p}}),
$$
for $(t,z)\in S(\delta_l,\frac{\pi(q-p)}{p}-\varepsilon,r)\times D_{\tilde r}.$

Notice that for such $(t,z)$ we have
$$-\frac{2\pi\*i}{q\*\sqrt[q]{t^p}}\,\*e^{-\frac{2(l-1)\pi i}{q}}\*a\,\*C_{\frac{q}{p}}(\frac{(z_0-z)e^{-\frac{2(l-1)\pi i}{q}}}{\sqrt[q]{t^p}})\sim_{\frac{p}{q-p}} 0.
$$
Therefore  $u^{\delta^+}(t,z)\sim_{\frac{p}{q-p}}\hat{u}(t,z)$ and $u^{\delta^-}(t,z)\sim_{\frac{p}{q-p}}\hat{u}(t,z)$.
\end{proof}

\begin{Uw}
Similarly to the heat equation we can also describe Stokes lines, anti-Stokes lines and jumps in case when the Cauchy datum $\varphi$ 
of (\ref{eq:generalization}) is given by a meromorphic function (\ref{eq:general_cauchy}) with $\tilde{\varphi}(z)\in\OO^{\frac{q}{q-p}}(\CC)$.
\end{Uw}

\section{Final remarks}
In the authors' opinion, using the methods of W.~Balser \cite{B5} and S.~Michalik \cite{Mic5}, one can extend the crucial results obtained
in this work to
general linear partial differential equations with constant coefficients
(\ref{eq:pde}) and with meromorphic Cauchy data.

Similarly, it seems to be possible to generalize the results of the paper to moment partial differential equations introduced
by W.~Balser and M.~Yoshino \cite{B-Y} and developed by S. Michalik \cite{Mic7,Mic8}.

One can also try to investigate the Stokes phenomenon for summable solutions of heat type equations on a real analytic manifold
using the results of G. {\L}ysik \cite{L4}.

We are going to study these problems in the subsequent papers.

\subsection*{Acknowledgements}
The authors would like to thank the anonymous referees for valuable comments and suggestions.

\bibliographystyle{siam}
\bibliography{summa}

\begin{thebibliography}{10}

\bibitem{B0}
{\sc W.~Balser}, {\em From divergent power series to analytic functions},
  vol.~1582 of Lecture Notes in Mathematics, Springer-Verlag, New York, 1994.

\bibitem{B1}
\leavevmode\vrule height 2pt depth -1.6pt width 23pt, {\em Divergent solutions
  of the heat equation: on an article of {L}utz, {M}iyake and {S}ch{\"a}fke},
  Pacific J. of Math., 188 (1999), pp.~53--63.

\bibitem{B2}
\leavevmode\vrule height 2pt depth -1.6pt width 23pt, {\em Formal power series
  and linear systems of meromorphic ordinary differential equations},
  Springer-Verlag, New York, 2000.

\bibitem{B5}
\leavevmode\vrule height 2pt depth -1.6pt width 23pt, {\em Multisummability of
  formal power series solutions of partial differential equations with constant
  coefficients}, J. Differential Equations, 201 (2004), pp.~63--74.

\bibitem{B-L}
{\sc W.~Balser and M.~Loday-Richaud}, {\em Summability of solutions of the heat
  equation with inhomogeneous thermal conductivity in two variables}, Adv. Dyn.
  Syst. Appl., 4 (2009), pp.~159--177.

\bibitem{B-Mi}
{\sc W.~Balser and M.~Miyake}, {\em Summability of formal solutions of certain
  partial differential equations}, Acta Sci. Math. (Szeged), 65 (1999),
  pp.~543--551.

\bibitem{B-Y}
{\sc W.~Balser and M.~Yoshino}, {\em Gevrey order of formal power series
  solutions of inhomogeneous partial differential equations with constant
  coefficients}, Funkcial. Ekvac., 53 (2010), pp.~411--434.

\bibitem{C-P-T}
{\sc O.~Costin, H.~Park, and Y.~Takei}, {\em Borel summability of the heat
  equation with variable coefficients}, J. Differential Equations, 252 (2012),
  pp.~3076--3092.

\bibitem{I2}
{\sc K.~Ichinobe}, {\em The {B}orel sum of divergent {B}arnes hypergeometric
  series and its application to a partial differential equation}, Publ. Res.
  Inst. Math. Sci., 37 (2001), pp.~91--117.

\bibitem{I}
\leavevmode\vrule height 2pt depth -1.6pt width 23pt, {\em Integral
  representation for {B}orel sum of divergent solution to a certain
  non-{K}ovalevski type equation}, Publ. Res. Inst. Math. Sci., 39 (2003),
  pp.~657--693.

\bibitem{I3}
\leavevmode\vrule height 2pt depth -1.6pt width 23pt, {\em On {$k$}-summability
  of formal solutions for a class of partial differential operators with time
  dependent coefficients}, J. Differential Equations, 257 (2014),
  pp.~3048--3070.

\bibitem{L-M-S}
{\sc D.~Lutz, M.~Miyake, and R.~Sch{\"a}fke}, {\em On the {B}orel summability
  of divergent solutions of the heat equation}, Nagoya Math. J., 154 (1999),
  pp.~1--29.

\bibitem{L3}
{\sc G.~{\L}ysik}, {\em Borel summable solutions of the {B}urgers equation},
  Ann. Pol. Math., 95 (2009), pp.~187--197.

\bibitem{L4}
\leavevmode\vrule height 2pt depth -1.6pt width 23pt, {\em The {B}orel summable
  solutions of heat operators on a real analytic manifold}, J. Math. Anal.
  Appl., 410 (2014), pp.~117--123.

\bibitem{Mal2}
{\sc S.~Malek}, {\em On the summability of formal solutions of linear partial
  differential equations}, J. Dyn. Control Syst., 11 (2005), pp.~389--403.

\bibitem{Mic5}
{\sc S.~Michalik}, {\em On the multisummability of divergent solutions of
  linear partial differential equations with constant coefficients}, J.
  Differential Equations, 249 (2010), pp.~551--570.

\bibitem{Mic9}
\leavevmode\vrule height 2pt depth -1.6pt width 23pt, {\em On {B}orel summable
  solutions of the multidimensional heat equation}, Ann. Polon. Math., 105
  (2012), pp.~167--177.

\bibitem{Mic7}
\leavevmode\vrule height 2pt depth -1.6pt width 23pt, {\em Analytic solutions
  of moment partial differential equations with constant coefficients},
  Funkcial. Ekvac., 56 (2013), pp.~19--50.

\bibitem{Mic8}
\leavevmode\vrule height 2pt depth -1.6pt width 23pt, {\em Summability of
  formal solutions of linear partial differential equations with divergent
  initial data}, J. Math. Anal. Appl., 406 (2013), pp.~243--260.

\bibitem{Miy}
{\sc M.~Miyake}, {\em Borel summability of divergent solutions of the {C}auchy
  problem to non-{K}ovaleskian equations}, in Partial Differential Equations
  and Their Applications, 1999, pp.~225--239.

\bibitem{O3}
{\sc S.~{\=O}uchi}, {\em Multisummability of formal power series solutions of
  nonlinear partial differential equations in complex domains}, Asymptot.
  Anal., 47 (2006), pp.~187--225.

\bibitem{R}
{\sc J.-P. Ramis}, {\em S\'eries divergentes et th\'eories asymptotiques},
  vol.~121 of Panoramas et synth\`eses, Soc. Math. France, Paris, 1993.

\bibitem{T-Y}
{\sc H.~Tahara and H.~Yamazawa}, {\em Multisummability of formal solutions to
  the {C}auchy problem for some linear partial differential equations}, J.
  Differential Equations, 255 (2013), pp.~3592–--3637.

\bibitem{Yo}
{\sc M.~Yoshino}, {\em Analytic continuation of {B}orel sum of formal solution
  of semilinear partial differential equation}, Asymptot. Anal., 92 (2015),
  pp.~65--84.

\end{thebibliography}
\end{document}